\newcommand{\R}{{\mathbb R}}
\newcommand{\ep}{\varepsilon}
 \renewcommand{\Im} {\mathrm{Im\,}}
\newtheorem{theorem}{Theorem}
\newtheorem{proposition}[theorem]{Proposition}
\newtheorem{lemma}[theorem]{Lemma}
\theoremstyle{definition}
\newtheorem{definition}{Definition}
\newtheorem{remark}{Remark}
\def\reals{{\mathbb R}}
\def\p{\partial}
\def\tchi{\tilde{\chi}}
\def\Ci{{\mathcal C}^\infty}
\def\O{{\mathcal O}}
\begin{document}

\title[Small-scale mass of Neumann eigenfunctions]{Non-concentration estimates  for Laplace eigenfunctions on compact $C^{\infty}$ manifolds with boundary}

\author[H. Christianson]{Hans Christianson}
\address{Department of Mathematics, UNC Chapel Hill} \email{hans@math.unc.edu}

\author[J. Toth]{John  A. Toth}
\address{Department of Mathematics and
Statistics, McGill University, 805 Sherbrooke Str. West, Montr\'eal
QC H3A 2K6, Ca\-na\-da.} \email{jtoth@math.mcgill.ca}

\maketitle

\begin{abstract} 
Let $\Omega$ be an $n$-dimensional  compact Riemannian manifold  $(n \geq 3)$ with $C^\infty$ boundary, and consider $L^2$-normalized eigenfunctions $ - \Delta \phi_{\lambda} = \lambda^2 \phi_\lambda$ with  Dirichlet or Neumann boundary conditions .  

In this note, we extend well-known interior nonconcentration bounds up to  the boundary. Specifically, in Theorem \ref{thm1}, using purely stationary local methods, we prove that for such $\Omega$ it follows that for {\em any} $x_0 \in \overline{\Omega}$ (including boundary points) and for all $\mu \geq C_{\Omega} \lambda^{-1}$ with  sufficiently large constant $C_{\Omega} >0,$
\begin{equation} \label{nonconbdy}
 \| \phi_\lambda \|_{B(x_0,\mu)\cap \Omega}^2 = O(\mu). \end{equation}
 In Theorem \ref{thm2} we extend a result of Sogge \cite{So} to manifolds with smooth boundary and show that
 \begin{equation} \label{SUPBD}
 \| \phi_\lambda \|_{L^\infty(\Omega)} \leq C \lambda^{\frac{n}{2}} \cdot  \Big( \sup_{x \in \Omega} \| \phi_{\lambda} \|_{L^2( B(x,\lambda^{-1}) \cap \Omega )} \Big). \end{equation}
 The sharp sup bounds $\| \phi_{\lambda} \|_{L^\infty(\Omega)} = O(\lambda^{\frac{n-1}{2}})$ for Dirichlet or Neumann eigenfunctions proved by Grieser in \cite{Gr} are then an immediate consequence of Theorems \ref{thm1} and \ref{thm2}.

\end{abstract}\ \\

\section{introduction}
A fundamental issue regarding eigenfunctions involves their concentration properties  on small  balls with radius that depends on the eigenvalue $\lambda^2$ as $\lambda \to \infty.$  
As pointed  out in \cite{So}, if $\p \Omega = \emptyset$, using the explicit asymptotic formula for the half-wave operator $e^{it \sqrt{- \Delta}}: C^{\infty}(\Omega) \to C^{\infty}(\Omega)$ it is not hard to prove that there exists $C_\Omega>0$ such that 

\begin{equation} \label{soggebound}
\| \phi_\lambda \|_{L^2(B(r))}^2 = O(r) \| \phi_\lambda \|_{L^2(\Omega)}^2, \quad  \forall r \geq C_\Omega \lambda^{-1} \end{equation}\

We refer to estimates of the form (\ref{soggebound}) as {\em non-concentration} bounds.
The example of highest weight spherical harmonics  on the round sphere (see Remark \ref{gaussian} below) shows that (\ref{soggebound}) is, in general, sharp. However, in certain cases, one expects improvements. For instance, in the case of surfaces with non-positive curvature, one can get logarithmic improvements  \cite{So, Han}.

Since the wave parametrix on manifolds with boundary is quite complicated, it is of interest to give a  proof of (\ref{soggebound}) using purely stationary methods.
The first result of this paper in Theorem \ref{thm1} an
extension of the bounds in (\ref{soggebound}) to Dirichlet or  Neumann
eigenfunctions in the case the $\Omega$ is a compact manifold with smooth boundary. To avoid the usual  complications arising from the  behaviour of the wave parametrix near the boundary,  our basic arguments here use a  stationary $h$-microlocal factorization argument avoiding the wave propagator altogether.

It is useful at this point to switch to the convenient semiclassical scaling $h = \lambda^{-1}$ with  $-h^2 \Delta \phi_h = \phi_h$.  Moreover, without loss of generality, we assume in the following that $\phi_h$ is {\em real-valued}.
\begin{theorem} \label{thm1}
Let $(\Omega^n,g)$ be a compact, Riemannian manifold with $C^{\infty}$ boundary. Then, for any $p_0 \in \overline{\Omega} $ and  Laplace eigenfunction $\phi_h$ with eigenvalue $h^{-2},$ there exist constants $h_0>0$ and $C_{\Omega} >0$ such that  for all $h \in (0,h_0].$
\begin{equation} \label{one}
     \| \phi_h \|^2_{L^2 ( B(p_0 , \mu) \cap \overline{\Omega} )} = O(\mu); \quad \mu \geq C_{\Omega} h.
  \end{equation}
  \end{theorem}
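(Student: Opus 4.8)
The plan is to run a stationary Rellich--Pohozaev commutator argument localized at the scale $\mu$, and then to iterate it over dyadic scales. Only the range $h\le\mu\le\mu_*$ (with $\mu_*$ a fixed small constant depending on $(\Omega,g)$) is nontrivial, since for $\mu\ge\mu_*$ the bound \eqref{one} is immediate from $\|\phi_h\|_{L^2(\Omega)}=1$. I would first pass to local coordinates: geodesic normal coordinates centered at $p_0$ when $p_0$ is interior, and boundary-normal (Fermi) coordinates flattening $\partial\Omega$ to $\{x_n=0\}$ when $p_0\in\partial\Omega$. The $C^2$ hypothesis on $\partial\Omega$ is precisely what guarantees that this coordinate change is $C^2$, hence that the pulled-back metric and the coefficients of $h^2\Delta$ are $C^1$ and the unit conormal is $C^1$ --- exactly the regularity one needs to justify the divergence-theorem manipulations below and to make the boundary trace integrals meaningful.

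The engine is the identity obtained by pairing $(-h^2\Delta-1)\phi_h=0$ with the multiplier $V\cdot\nabla\phi_h+\psi\,\phi_h$, for a vector field $V$ supported in a fixed small coordinate ball and a scalar $\psi$, and integrating by parts:
\begin{equation}\label{rellich}
 h^2\!\int_\Omega\!\big\langle(\nabla_{\!\mathrm{sym}}V)\nabla\phi_h,\nabla\phi_h\big\rangle
 +\int_\Omega\!\big(\tfrac12\nabla\!\cdot\!V-\psi\big)\big(\phi_h^2-h^2|\nabla\phi_h|^2\big)
 -\tfrac{h^2}{2}\!\int_\Omega\!(\Delta\psi)\,\phi_h^2=\mathcal B_{\partial\Omega}(\phi_h),
\end{equation}
where $\nabla_{\!\mathrm{sym}}V$ is the symmetrized covariant derivative and $\mathcal B_{\partial\Omega}$ collects boundary integrals over $\partial\Omega$ of $\phi_h^2$, $|\nabla\phi_h|^2$, $(\partial_\nu\phi_h)(V\cdot\nabla\phi_h)$ and $\phi_h\partial_\nu\phi_h$, weighted by $V\cdot\nu$ or $\partial_\nu\psi$. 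I would also record the elementary Caccioppoli-type bound, obtained by pairing the equation with $\chi^2\phi_h$: for $h\le s$, $h^2\|\nabla\phi_h\|_{L^2(B(x,s)\cap\Omega)}^2\le C_0\|\phi_h\|_{L^2(B(x,2s)\cap\Omega)}^2$; this will be used to dispose of all interior $h^2|\nabla\phi_h|^2$ contributions.

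For the local step I would choose $V$, in the model coordinates, to be a compactly supported vector field (half-space-adapted and inward-pointing at $\partial\Omega$ when $p_0\in\partial\Omega$) that on $B(p_0,\mu)$ equals $c\mu^{-1}$ times the radial field --- so there $\nabla_{\!\mathrm{sym}}V=c\mu^{-1}I$ and $\nabla\!\cdot\!V=cn\mu^{-1}$ --- together with the ``conformal weight'' $\psi=\tfrac{(n-2)c}{2\mu}$ on $B(p_0,\mu)$ (and $\psi$ a fixed multiple of $\nabla\!\cdot\!V$ elsewhere). With this choice the integrand of the first two terms of \eqref{rellich} collapses, on $B(p_0,\mu)$ and up to curvature corrections, to exactly $c\mu^{-1}\phi_h^2$ (the locally constant $\psi$ killing the $\Delta\psi$ term there), so \eqref{rellich} becomes a scale-recursion
\begin{equation}\label{recursion}
 \frac{c}{\mu}\,\|\phi_h\|_{L^2(B(p_0,\mu)\cap\Omega)}^2
 +\big[\text{annulus terms supported in }B(p_0,C\mu)\setminus B(p_0,\mu)\big]
 =\mathcal B_{\partial\Omega}(\phi_h).
\end{equation}
The annulus terms have coefficients $O(\mu^{-1})$ (with $\Delta\psi=O(\mu^{-3})$) and live on a region where $h\le\mu$, so the Caccioppoli bound turns them into $O(\mu^{-1})\|\phi_h\|_{L^2(B(p_0,2C\mu)\cap\Omega)}^2$; the term $\mathcal B_{\partial\Omega}$ is dealt with using the boundary condition --- for Dirichlet, $\phi_h|_{\partial\Omega}=0$ collapses $\mathcal B_{\partial\Omega}$ to $h^2\!\int_{\partial\Omega}(V\cdot\nu)(\partial_\nu\phi_h)^2$, made favourably signed by the inward choice of $V$; for Neumann, $\partial_\nu\phi_h|_{\partial\Omega}=0$ leaves boundary integrals in $\phi_h^2$ and $|\nabla_T\phi_h|^2$ that are absorbed using the sign of $V\cdot\nu$ together with the standard boundary trace estimates for Neumann eigenfunctions.

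The crux --- the step I expect to be the main obstacle --- is the interplay of the two $\mu^{-1}$'s in \eqref{recursion}: one must engineer the transition of $V$ and the off-center profile of $\psi$ so that the annulus terms come with a constant \emph{strictly smaller} than $c$, upgrading \eqref{recursion} to a genuine contraction $\|\phi_h\|_{L^2(B(p_0,\mu)\cap\Omega)}^2\le\theta\,\|\phi_h\|_{L^2(B(p_0,2C\mu)\cap\Omega)}^2+C_1\mu$ with $\theta<1$; this balancing is dimension-dependent (the conformal weight is inserted precisely to neutralize the otherwise unfavorable $(n-2)$-term), and near $\partial\Omega$ it must coexist with the sign requirements on $V\cdot\nu$ at only $C^1$ regularity of the conormal. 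Once the contraction is in hand, iterating it along the dyadic scales $\mu,2C\mu,(2C)^2\mu,\dots$ up to scale $\asymp 1$ --- where $\|\phi_h\|_{L^2(\Omega)}^2=1$ furnishes the base case --- sums a geometric series and yields $\|\phi_h\|_{L^2(B(p_0,\mu)\cap\Omega)}^2=O(\mu)$ uniformly for $h\le\mu$, which is \eqref{one}.
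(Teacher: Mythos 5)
The paper also runs a Rellich--Pohozaev commutator argument at scale $\mu$ with a boundary-adapted vector field, so the high-level mechanism you describe is in the right family, but the execution differs in a way that matters, and there is a genuine gap at exactly the step you flag as the crux. The paper takes $A(h)=\psi(\mu^{-1}x)\,(x_n hD_{x_n}+x'\cdot hD_{x'})$ in Fermi coordinates --- a vector field of size $O(\mu)$, not $O(1)$ as in your $\mu^{-1}$-scaled choice --- so that the main commutator term $\frac{i}{h}\psi(\mu^{-1}x)[-h^2\Delta-1,x\cdot hD_x]$ produces $2\phi+O(\mu)$ with no $\mu^{-1}$ prefactor, and the boundary pairing $\langle h\partial_n\phi,A\phi\rangle_{\partial\Omega}-\langle\phi,h\partial_n(A\phi)\rangle_{\partial\Omega}$ vanishes identically in both the Dirichlet and Neumann cases because $x_n=0$ and the tangential part commutes with $\partial_n$ on the boundary (this is why $\psi$ is taken constant in $x_n$ near $x_n=0$). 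The crucial difference is how the cutoff error $\frac{i}{h}\langle[-h^2\Delta-1,\psi(\mu^{-1}x)]\,x\cdot hD_x\phi,\phi\rangle$ is handled: the derivatives of the cutoff contribute factors of $h/\mu$, while the vector field contributes a factor of $\mu$ on the support, and the global semiclassical elliptic estimate $\|\phi_h\|_{H^2_h(\Omega)}=O(1)$ (which needs no localization) closes the estimate with an error of size $O(h)\le O(\mu)$ in a single step. No dyadic iteration, and no contraction constant, is ever invoked.

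The step you identify as the ``main obstacle'' --- producing a genuine contraction --- is where your outline has a real gap, and it is more severe than you suggest. Summing the recursion $a_k\le\theta\,a_{k+1}+C_1(2C)^k\mu$ from $\mu$ up to unit scale yields $a_0\le\theta^N+C_1\mu\sum_{k<N}(2C\theta)^k$ with $N\asymp\log(1/\mu)/\log(2C)$, so to conclude $a_0=O(\mu)$ one needs not $\theta<1$ but $\theta\le 1/(2C)$; with the milder $\theta<1$ the geometric series diverges and you only obtain $a_0=O(\mu^\alpha)$ for some $\alpha=\log(1/\theta)/\log(2C)<1$. There is no mechanism in your outline --- the conformal weight, the ``off-center profile'' of $\psi$, the ``engineering'' of the transition region --- that is shown to deliver any contraction constant, let alone one smaller than the dyadic ratio, and the Rellich identity by itself is an equality in which the annulus terms are generically of the same strength as the main term after rescaling. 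Near $\partial\Omega$ the situation is worse: you would need the contraction to coexist with the sign constraints on $V\cdot\nu$, and your treatment of the Neumann boundary integrals (``absorbed using the standard boundary trace estimates'') is not substantiated. The resolution the paper uses is to discard the iteration altogether: replace the $\mu^{-1}$-scaled field by an $O(\mu)$-size one so the main term is $\|\phi\|^2_{B_\mu}$ itself, make the boundary terms vanish exactly by a Fermi-adapted choice of vector field, and estimate the cutoff errors against the \emph{global} $H^2_h$ norm rather than against a local mass at the next dyadic scale. Incorporating that one idea --- which makes the whole scheme a single-step identity rather than a recursion --- would close your argument.
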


\begin{remark} \label{gaussian}  The estimate in Theorem \ref{thm1} is sharp for all $\mu \geq h^{1/2}.$ To see this, one need only consider eigenfunctions that are Gaussian beams. For concreteness, let $ (S^2,g)$ be the round sphere. In terms of Euclidean coordinates $(x,y,z) \in \R^3$, the highest-weight spherical harmonics on $S^2 = \{(x,y,z) \in \R^3, x^2 + y^2 + z^2 =1 \}$ are given by
$$ \phi_h (x,y,z)  = (2\pi n)^{1/4} (x + i y)^{n}; \quad n=1,2,3,...$$
Setting $h = n^{-1}$ and noting that $ |\phi_h(x,y,z)| = (2\pi h)^{-1/4} ( 1- z^2 )^{1/h}$ when $(x,y,z) \in S^2,$

Assuming $\mu \geq h^{1/2},$ it follows that for any $p \in \{ (x,y,z) \in S^2, z =0 \},$

$$ \| \phi_h \|_{B(p,\mu)}^2  \approx C_1 h^{-1/2} \mu \, \int_{|z| < \mu } (1-z^2)^{2/h} \, dz $$
$$\approx C_2 \mu h^{-1/2} \int_{|z|<\mu} e^{-z^2/h} \, dz \approx \mu \int_{|w| < \mu h^{-1/2} }  e^{-w^2} dw \approx \mu,$$
and so, the non-concentration bounds are sharp when $\mu \in [h^{1/2},1].$

In the special case where the $\phi_h$ satisfy polynomial
 small-scale quantum ergodicity (SSQE) on a scales $\mu \geq h$ since the volume of a ball of radius $\mu$ is
$\mu^n$ one putatively expects a bound of $O(\mu^n)$ on the RHS in Theorem
\ref{thm1}. Unfortunately, to our knowledge, there are no
rigorous results  on polynomial SSQE  and seem well out of reach at present.
Logarithmic SSQE was proved by X. Han \cite{Han}. \end{remark}

\begin{remark} It is natural to expect that  the non-concentration bound in Theorem \ref{thm1} holds also when $n=2.$ Similarly, it should also hold under the weaker $C^2$-assumption on the boundary $\partial \Omega.$ We hope to address both points elsewhere. 
\end{remark}

\begin{remark}
In a companion paper \cite{CT}, using a different  Rellich-type commutator argument, we prove the standard non-concentration bounds in Theorem \ref{thm1}  in the special case of bounded, {\em convex}  Euclidean planar domains with corners under the constraint that
$\mu(h) \geq h^{1-\epsilon}$ for any fixed $\epsilon >0.$
\end{remark}

We thank Jeff Galkowsk and Michael Taylor for many helpful discussions regarding earlier versions of the paper.

\section{Proof of Theorem \ref{thm1}} 

\begin{proof}
We  first give the proof for {\em interior} balls with $B(x,\mu) \cap \partial \Omega = \emptyset$ and then adapt the proof to the case of {\em boundary} balls with $B(x,\mu) \cap \partial \Omega \neq \emptyset.$

 \subsection{Interior estimates on scales $\mu \geq C_{\Omega} h$} \label{int1} Here, we give an alternative stationary proof of the non-concentration bounds in the special case where either $(\Omega, g)$ is $C^{\infty}$ compact manifold without boundary, or alternatively, in the case where $\partial \Omega \neq \emptyset,$  with $ z_0 \in \mathring{\Omega}$ and $B(z_0,\mu) \in \mathring{\Omega}$ and the metric $g$ is assumed to be $C^{\infty}$ in $\mathring{\Omega}.$

Let $P(h): = - h^2 \Delta_g -1$ with principal symbol  
$p(x,\xi) = |\xi|^2_g - 1,$ where $|\xi|^2_g = \sum_{i,j =1}^n g^{ij}(x) \xi_i \xi_j.$  First, by energy concentration of eigenfunctions $\phi_h$ with $P(h) \phi_h =0,$ one can microlocalize to a fixed compact neighbourhood of the characteristic varietly $S^*M = p^{-1}(0).$ More precisely, given any open  interior neighbourhood $U \ni z_0$ and for  fixed $\epsilon >0,$ we set $S_{\ep}^*= \{ \xi \in T^*_x M;  | 1 - |\xi|_g^2 | \leq \epsilon. \}$ Then given any cutoff $\psi \in C^{\infty}_0(S^*_{\epsilon})$ with the property that $\psi |_{S_{\ep/2}^*} =1,$ by well-known energy concentration estimates \cite{Zw},

\begin{equation} \label{energycon}
\|(1-\psi(x, hD) ) \phi_h \|_{C^k(U)} = O_k(h^{\infty}).
\end{equation}

Then, given any cutoff $\rho \in C^{\infty}_0(U),$ we set  
$$v_h:= \rho(x) \psi(hD) \phi_h \in C^{\infty}_0(U)$$
and note that since $P(h) \phi_h = 0$ and $\rho(x) \psi(hD) \in Op_h(S^0),$
\begin{equation} \label{localize}
P(x,hD) v_h = [P(x,hD), \rho(x) \psi(hD)] \phi_h = O_{L^2}(h)
\end{equation} 
by $L^2$-boundedness. 

In the following, it suffices to work with the microlocalized eigenfunctions $v_h := \psi(x,hD) \phi_h.$ Since the real principal type condition $d_{\xi} \,p(x,\xi) \neq 0$ is satisfied for $(x,\xi) \in S^*_{\ep}U,$ it follows by an application of the implicit function theorem to $p(x,\xi)$ that there are points $w_j \in T^*U; j=1,..,N$ with open neighbourhoods $V_{w_j}; j=1,...N,$ such that
$$S_{\ep}^*U \subset \bigcup_{j=1}^N V_{w_j}$$
and with fiber coordinates $\xi = (\xi_k, \xi')$ on $V_{w_j},$
\begin{equation} \label{IFT}
p(x,\xi) = e_j(x,\xi) \big(  \xi_k - a_j(x,\xi') ); \quad e_j(x,\xi) \geq C_j>0, \,\,\, (x,\xi) \in V_{w_j}. \end{equation}\

Let $\rho_j \in C^{\infty}_0(T^*U); j=1,...,N$ be a partition of unity subordinate to the cover $\{ V_{w_j} \}_{j=1}^N$. It follows from (\ref{IFT}) and $L^2$-boundedness that with another cutoff $\tilde{\rho_j} \Supset \rho_j$ and $\tilde{\rho_j} \in C^{\infty}_0 (V_{w_j});[0,1]),$

\begin{equation} \label{IFT2}
\tilde{\rho_j} \,e_j(x,hD) \, \big(  hD_{x_k} - a_j(x,hD') \big) \rho_j v_h = O_{L^2}(h). \end{equation}\

In (\ref{IFT2}) and below,  we abuse notation somewhat and sometimes write $\rho_j:= \rho_j(x,hD)$ and $\tilde{\rho_j}: = \tilde{\rho_j}(x,hD)$ when the context is clear.

Since $e_j(x,hD)$ is $h$-elliptic on $V_{w_j}$ it follows by a standard $h$-microlocal parametrix construction applied to (\ref{IFT2}) that

\begin{equation} \label{IFT3}
\tilde{\rho_j} \big(  hD_{x_k} - a_j(x,hD') \big) \rho_j v_h = O_{L^2}(h). \end{equation}\

In the following, we will employ a number of convenient spatial cutoff
functions that we introduce here.

\vspace{1in}

  Let $\tchi(s) \in \Ci ( \reals)$ satisfy the following conditions:
  \begin{itemize}

  \item $\tchi$ is odd,

  \item $\tchi' \geq 0$,

    \item $\tchi(s) \equiv -1$ for $s \leq -3$ and $\tchi(s) \equiv 1$ for $s
      \geq 3$,

    \item
      $\tchi(-1) = -1/2$ and $\tchi(1) = 1/2$,

    \item $\tchi(s) = \frac{s}{2}$ for $-1 \leq s \leq 1$.

  \end{itemize}
See Figure \ref{F:tchi-2} for a picture. 
    \begin{figure}
\hfill
\centerline{
\begingroup%
  \makeatletter%
  \providecommand\color[2][]{%
    \errmessage{(Inkscape) Color is used for the text in Inkscape, but the package 'color.sty' is not loaded}%
    \renewcommand\color[2][]{}%
  }%
  \providecommand\transparent[1]{%
    \errmessage{(Inkscape) Transparency is used (non-zero) for the text in Inkscape, but the package 'transparent.sty' is not loaded}%
    \renewcommand\transparent[1]{}%
  }%
  \providecommand\rotatebox[2]{#2}%
  \newcommand*\fsize{\dimexpr\f@size pt\relax}%
  \newcommand*\lineheight[1]{\fontsize{\fsize}{#1\fsize}\selectfont}%
  \ifx\svgwidth\undefined%
    \setlength{\unitlength}{273.47494125bp}%
    \ifx\svgscale\undefined%
      \relax%
    \else%
      \setlength{\unitlength}{\unitlength * \real{\svgscale}}%
    \fi%
  \else%
    \setlength{\unitlength}{\svgwidth}%
  \fi%
  \global\let\svgwidth\undefined%
  \global\let\svgscale\undefined%
  \makeatother%
  \begin{picture}(1,0.38988665)%
    \lineheight{1}%
    \setlength\tabcolsep{0pt}%
    \put(0,0){\includegraphics[width=\unitlength,page=1]{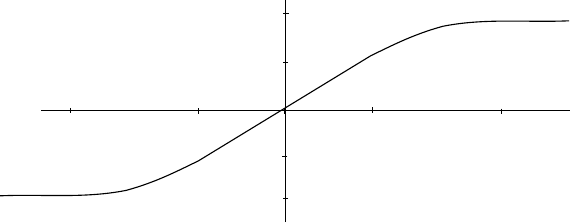}}%
    \put(0.09378875,0.16286937){\color[rgb]{0,0,0}\makebox(0,0)[lt]{\lineheight{1.25}\smash{\begin{tabular}[t]{l}$-3$\end{tabular}}}}%
    \put(0.32922772,0.16438834){\color[rgb]{0,0,0}\makebox(0,0)[lt]{\lineheight{1.25}\smash{\begin{tabular}[t]{l}$-1$\end{tabular}}}}%
    \put(0.64517173,0.15983148){\color[rgb]{0,0,0}\makebox(0,0)[lt]{\lineheight{1.25}\smash{\begin{tabular}[t]{l}$1$\end{tabular}}}}%
    \put(0.85782626,0.16135039){\color[rgb]{0,0,0}\makebox(0,0)[lt]{\lineheight{1.25}\smash{\begin{tabular}[t]{l}$3$\end{tabular}}}}%
  \end{picture}%
\endgroup%
}
\caption{\label{F:tchi-2} A  sketch of the function $\tchi$ used in
  the proof of Theorem \ref{thm1}.}
\hfill
\end{figure}
Let $\gamma(s) = \tchi'(s)$ so that $\gamma$ has support in $\{ -3
\leq s \leq 3 \}$, $\gamma(s) \geq 0$, and $\gamma(s) \equiv 1/2$ for
$| s | \leq 1$.

 In the following, it will also be  useful to define the corresponding rescaled functions
 $$\tilde{\chi}_{\mu}(s):= \tilde{\chi} ( \mu^{-1} s), \quad \gamma_{\mu}(s):= \gamma( \mu^{-1} s), \, \, \, \mu \geq h.$$

Next, setting $\gamma(x):= \prod_{k=1}^n \gamma(x_k),$ we multiply (\ref{IFT3}) by $\tilde{\chi}_{\mu}(x_k) \cdot \gamma(x)$ and integrate against $ \rho_j v_h(x)$ to get

\begin{align} \label{Int0}
\langle \tilde{\chi}_{\mu}(x_k) \gamma(x)  & \tilde{\rho_j}(x,hD) \, h D_{x_k}  \rho_j v_h,   \, \rho_j v_h \rangle_{L^2} \\
& = \langle \tilde{\chi}_{\mu}(x_k)  \gamma(x) \tilde{\rho_j}(x,hD) a_j(x,hD') \rho_j v_h,  \, \rho_j v_h \rangle_{L^2}  \nonumber 
    \,+ \, O(h).
\end{align} \

First, we note that since $\tilde{\rho_j}(x,\xi) =1 $ for $(x,\xi) \in \text{supp} \, \rho_j$, so the symbol of the commutator

$$ \sigma([ \tilde{\rho_j}, a_j(x,hD')])(x,\xi) \sim 0, \quad   (x,\xi) \in \text{supp} \, \rho_j.$$\

Consequently, from (\ref{Int0}) it follows that for the first term on the RHS of (\ref{Int0}),
\begin{align} \label{Int0.1}
 \langle \tilde{\chi}_{\mu}(x_k) \gamma(x)  \tilde{\rho_j} & a_j(x,hD') \rho_j v_h,  \, \rho_j v_h \rangle_{L^2} \\
 & = \langle \tilde{\chi}_{\mu}(x_k)  \gamma(x) a_j(x,hD') \rho_j v_h,  \, \rho_j v_h \rangle_{L^2}
    \,+ \, O(h^\infty). \nonumber
\end{align} \

Since $a_j(x,hD')^* = a_j(x,hD') + O(h)_{L^2 \to L^2},$  $[ a_j(x,hD'),  \tilde{\chi}_{\mu}(x_k) ] =0$ and by $L^2$-boundedness, 

$$[ a_j(x,hD'), \tilde{\chi}_{\mu}(x_k) \gamma(x) \tilde{\rho_j}(x,hD) ] = O(h)_{L^2 \to L^2},$$
it follows from (\ref{Int0.1}) that

\begin{equation} \label{imag1}
\Im   \langle \tilde{\chi}_{\mu}(x_k)  \gamma(x) \tilde{\rho_j}(x,hD) a_j(x,hD') \rho_j v_h,  \, \rho_j v_h \rangle_{L^2} = O(h). \end{equation} \

As for the LHS of (\ref{Int0}), we again use the fact that $\tilde{\rho_j} =1$ on  $ \text{supp} \, \rho_j$ and
$$ \sigma([ \tilde{\rho_j}, h D_{x_k} ])(x,\xi) \sim 0, \quad   (x,\xi) \in \text{supp} \, \rho_j.$$\
to get that

\begin{align} \label{imag2}
\langle \tilde{\chi}_{\mu}(x_k) \gamma(x) & \tilde{\rho_j}(x,hD) \, h D_{x_k}  \rho_j v_h,   \, \rho_j v_h \rangle_{L^2} \\
& = \langle \tilde{\chi}_{\mu}(x_k)  \gamma(x) \, h D_{x_k}  \rho_j v_h,   \, \rho_j v_h \rangle_{L^2}
    \,+ \, O(h).\nonumber
\end{align} \

Taking imaginary parts of both sides of (\ref{imag2}) gives

\begin{align} \label{imag3}
\Im \, \langle \tilde{\chi}_{\mu}(x_k) \gamma(x)  & \tilde{\rho_j}(x,hD) \, h D_{x_k}  \rho_j v_h,   \, \rho_j v_h \rangle_{L^2} 
 \\
 & = \frac{h}{2}  \idotsint \tilde{\chi}_{\mu}(x_k) \gamma(x)  \,\partial_{x_k}   \, | \rho_j v_h |^2 \, dx' dx_k \nonumber
    \,+ \, O(h).
\end{align} \

From (\ref{Int0}), (\ref{imag3}) and (\ref{imag1}) it then follows that

\begin{equation} \label{Int0.11}
\frac{h}{2} \idotsint    \tilde{\chi}_{\mu}(x_k) \gamma(x)   \, \partial_{x_k}  | \rho_j v_h(x) |^2  \, dx_k  dx' =  O(h).
\end{equation} \

Next, one integrates by parts on the LHS in the $x_k$-variable (\ref{Int0.11}). The boundary terms vanish since $\rho_j$ has compact support in the $x$-variables and differentiation of  $\gamma(x)$ term contributes an $O(h)$ term that can be absorbed in the RHS of (\ref{Int0.11}). The result is that

\begin{equation} \label{Int1}
\frac{h}{\mu} \idotsint    \gamma_{\mu}(x_k)  \gamma(x)  \, | \rho_j v_h(x) |^2  \, dx_k \, dx' =  O(h).
\end{equation} \

Since $\gamma_{\mu}(s) \in C^{\infty}_0(\R; [0,1])$ with $\gamma_{\mu}(s) \equiv \frac{1}{2} $ for $|s| \leq \mu,$ it follows from (\ref{Int1}) that

\begin{equation} \label{Int2}
 \frac{h}{\mu} \int_{|x_k| \leq \mu}  \int_{|x'| \leq 1}  |\rho_j v_h(x)|^2 \, dx_j dx' = O(h); \quad k=1,2,...,n.\end{equation}\
 
 Clearly the bound in (\ref{Int2}) holds for all indices $j=1,...N.$ Recalling that $ \sum_{j} \rho_j v_h = v_h + O_{C^{\infty}} (h^{\infty}),$  we use the pointwise bound
 $ |v_h|^2 \leq N  \sum_j |\rho_j v_h|^2 + O(h^{\infty}),$
 and cancel the $h$ on both sides of (\ref{Int2}) to get
 
 \begin{eqnarray} \label{Intupshot}
 \mu^{-1} \int_{|x_1| \leq \mu}  \cdots \int_{|x_n| \leq \mu}   |v_h(x)|^2 \, dx 
 \leq N  \mu^{-1} \sum_{j=1}^N \int_{|x_1| \leq  \mu} \cdots \int_{|x_n| \leq \mu}   |\rho_j v_h(x)|^2 \, dx \nonumber \\
 \leq N  \mu^{-1} \sum_{j=1}^N \int_{|x_k| \leq \mu}  \int_{|x'| \leq 1}   |\rho_j v_h(x)|^2 \, dx  = O(1).
  \end{eqnarray}\

 Since cubes of diameter $\mu$ are comparable to balls of radius $\mu$, the estimate in (\ref{Intupshot}) proves Theorem \ref{thm1} in the interior of $\Omega$ on {\em all} scales $0 < \mu \leq 1.$

 \subsection{Estimates at the boundary on scales $\mu \geq C_{\Omega} h.$}

 Let $\tilde{\Omega} \supset \overline{\Omega}$ be an open, smooth $n$-manifold that is an extension of $\overline{\Omega} = \Omega \cup \partial \Omega$ satisfying $\overline{\Omega} \subset \text{Int} \, \tilde{\Omega}.$ We make a $C^\infty$-extension of the Riemannian metric on $\overline{\Omega}$ to $\tilde{\Omega}$  and we abuse notation somewhat and denote the extended $C^{\infty}$ metric  by $g$ also.  We refer to $U:=\tilde{\Omega} \setminus \overline{\Omega}$ as the {\em collar extension} of $\Omega.$  Here, we assume that $B(z_0,\mu) \cap \partial \Omega$ and so,  we smoothly extend $\Omega$ to  $\tilde{\Omega}$ and smoothly extend the Riemannian metric to $\tilde{\Omega}.$ Unless specified otherwise,  $(x',x_n): U \to \R^n$ will  denote Fermi coordinates in $U$ with $\partial \Omega = \{ x_n = 0 \}.$  Since $\mu \to 0^+,$ without loss of generality we assume that  $B(z_0,\mu) \subset \tilde{\Omega}.$
 When $B(x,\mu) \cap \partial \Omega \neq \emptyset,$ we argue in a similar fashion to the interior case, but the $h$-microlocal factorization argument for $P(h) = - h^2 \Delta_g - 1$ is more subtle.

 Given $f \in C^{\infty}(\overline{\Omega}),$ we let $f^0$ be the extension of $f$ to $\tilde{\Omega}$ by zero. More precisely,
 \begin{equation} \label{extension}
 f^0(x) := 
 \begin{cases}
 f(x) \,\,\, \,\, x_n \geq 0,\\
 0\,\,\, \,\, x_n <0. \\
 \end{cases}
 \end{equation}
 
 Also, given $f \in C^{\infty}(\overline{\Omega}),$ we let $f_0:= f |_{\partial \Omega}$ denote the boundary trace.
 Following the argument in the interior case, we let $U \ni z_0$ be a sufficiently small open neighbourhood with $U \subset \tilde{\Omega}$ and let $\rho \in C^{\infty}_0(U).$ As in the interior case,  the first step is an energy localization (in this case, for $\phi_h^0$) that is crucial for the subsequent factorization argument. This turns out to be substantially weaker than in the interior due to boundary effects, but still suffices for non-concentration estimates.
 
 We summarize this in the following
 \begin{lemma} \label{bdyloc}
 For any fixed $\epsilon>0$, we let $S^*_{\ep} = \{ \xi \in T_x^*\tilde{\Omega}, \, | 1 - |\xi|_x | < \epsilon \} $ and consider a frequency cutoff $\psi \in C_0^{\infty}(S_{\ep}^*)$ with $\psi |_{S_{\ep/2} ^*} =1.$ Then, 
 $$\| (1 - \psi(x,hD) ) \phi_h^0 \|^2_{L^2(\tilde{\Omega})} = O(h^{\alpha}),$$
 with $\alpha =2$ (resp. $\alpha = \frac{4}{3}$)  in the Dirichlet (resp. Neumann) case.
 Here, we view $\psi(x,hD) \in \Psi^0_{sc}(\tilde{\Omega}).$
 \end{lemma}

 \begin{proof} We first apply the  operator $P(h) = -h^2 \Delta_g -1$ in Fermi coordinates to $\phi_h^0(x) = H(x_n) \phi_h(x)$ where $H(x_n) = 1$ when $x_n \geq 0$ and vanishes otherwise. Since $P(h) = h^2 D_{n}^2 + h a(x) hD_n + R(x,hD'),$ where $R$ is a tangential $h$-differential operator, direct computation gives

 \begin{eqnarray} \label{fermi}
\big( h^2 D_{n}^2 + h \,a(x) hD_n + R(x,hD') \big) ( H(x_n) \phi_h(x) ) \hspace{3in} \nonumber \\
 = ( P(h) \phi )^0(x)  - 2 h  ( \delta(x_n) \otimes h (\partial_n \phi)_0 ) - h^2 ( \delta'(x_n) \otimes \phi_0 ) 
 -i h^2 a(x) ( \delta(x_n) \otimes \phi_0 ) \hspace{.5in} \nonumber \\
 = - 2 h  ( \delta(x_n) \otimes (h \partial_n \phi)_0 ) - h^2 ( \delta'(x_n) \otimes \phi_0 ) -i h^2 a(x) ( \delta(x_n) \otimes \phi_0 ). \hspace{1in}
\end{eqnarray}
In the last step, we use that $(P \phi)^0 = 0.$
 Let $E(h) \in \Psi^0_{sc}(\tilde{\Omega})$ be an $h$-microlocal parametrix for $P(h)$ off $p^{-1}(0) = S^* \tilde{\Omega},$ so that with $L^2:= L^2(\tilde{\Omega}),$
 \begin{equation} \label{par}
 E(h) P(h) =  I -\psi(x,hD)  + O(h^{\infty})_{L^2 \to L^2}.
 \end{equation}
 An application of $E(h)$ to both sides of (\ref{fermi}) gives
 \begin{eqnarray} \label{fermi2}
 (I - \psi(x,hD)) \phi^0 + O_{L^2}(h^{\infty}) = -2 h  E(h)  ( \delta(x_n) \otimes h (\partial_n \phi)_0 ) - h^2 E(h) ( \delta'(x_n) \otimes \phi_0 ) \nonumber \\
 -i h^2 E(h) ( a(x) ( \delta(x_n) \otimes \phi_0) ). \hspace{2in}
 \end{eqnarray}

 We denote the semiclassical Neumann (resp. Dirichlet) boundary data by  $\phi^N(x',0) = h \partial_n \phi (x',0) = \delta(x_n) \otimes h (\partial_n \phi)_0$ and $\phi^D(x',0) = \delta(x_n) \otimes \phi_0 = \phi(x',0)$ respectively. 
 
 First, we assume Dirichlet boundary conditions with $\phi(x',0) \equiv  0.$ Then, 
 
 $$E(h) ( \delta'(x_n) \otimes \phi_0 )  =  E(h) ( a(x) ( \delta(x_n) \otimes \phi_0 ) ) =0$$
 
 in ${\mathcal D}'(\tilde{\Omega})$ and so, (\ref{fermi2}) simplifies to 
 
 \begin{equation} \label{D1}
 (I - \psi(x,hD)) \phi^0 = -2 h  E(h)  ( \delta(x_n) \otimes h (\partial_n \phi)_0 ) + O_{L^2(\tilde{\Omega}) }(h^{\infty}).
 \end{equation}

 Define
 \begin{equation} \label{T}
 T_D(h) \phi^{N} := - 2 h  E(h)  ( \delta(x_n) \otimes h (\partial_n \phi)_0 ).
 \end{equation}
 
  Then, $T_D(h): C^{\infty}(\partial \Omega) \to C^{\infty}(\overline{\Omega})$ is an $h$-Fourier integral operator ($h$-FIO) with Schwarz kernel of the form
 \begin{equation} \label{FIO1}
 T_D(x,y') = (2 \pi h)^{-n} \int_{\R^n} e^{ i x_n \xi_n / h} \, e^{i \langle x' -y', \xi' \rangle /h} \,  h \, e(x,\xi,h) \, d\xi' d\xi_n,
 \end{equation}
 where $e(x,\xi,h) \sim \sum_{j=0}^{\infty} e_j(x,\xi) h^{j}$ with principal symbol 
 $$e_0(x,\xi) = (1-\psi)(\xi) \cdot (|\xi|_x^2 -1)^{-1}$$
 and $\langle \xi \rangle^j e_j \in S^{0}_{sc}(T^*\tilde{\Omega}); j=1,2,3,....$
 \begin{remark} We note that there is an extra factor of $h$ appearing in the amplitude of (\ref{FIO1}) due to the $h$ multiplier on the RHS of (\ref{T}). \end{remark}
 Then, by a standard stationary phase argument in the $(x,\xi)$-variables, it follows that $T_D(h)^* T_D(h) \in h \Psi^{0}_{sc}(\partial \Omega)$ with

 \begin{equation} \label{FIO2}
  T_D(h)^* T_D(h)(y',z') = (2\pi h)^{-n} \int_{\R^{n-1}} e^{i \langle y'- z',\eta' \rangle/h}  h^2 \, e_{D}(y',\eta',h) \, d\eta',
  \end{equation}
 
 where $e_D \in S^{0}_{sc}(T^* \partial \Omega).$ Then, by $L^2$-boundedness, 
 $ \| T_D(h)^* T_D(h) \|_{L^2(\partial \Omega) \to L^2(\partial \Omega)} = O(h^2)$ and so,
 \begin{equation} \label{D2}
 \| T_D(h) \phi^N \|_{L^2(\overline{\Omega}) }   = O(h) \| \phi^N \|_{L^2(\partial \Omega)}  = O(h),
 \end{equation}
 where in the last estimate in (\ref{D2}) we have used that under Dirichlet boundary conditions \cite{HT, CHT},
 $$ \| \phi^N \|_{L^2(\partial \Omega)}^2 = \int_{\partial \Omega} | h \partial_{\nu} \phi |^2 d\sigma = O(1).$$
 Then, from (\ref{D1}) and (\ref{D2}) it follows that
 \begin{equation} \label{Dupshot}
 \| (I - \psi(x,hD)) \phi^0 \|^2_{L^2(\tilde{\Omega})} = O(h^2).
 \end{equation}

 Next, we assume Neumann boundary conditions $\partial_{n} \phi (x',0) \equiv 0.$
 Then, $  E(h)  ( \delta(x_n) \otimes h (\partial_n \phi)_0 ) = 0$ in ${\mathcal D}'(\tilde{\Omega})$ and so, from (\ref{fermi2})
 \begin{equation} \label{n0}
  (I - \psi(x,hD)) \phi^0 =  - h^2 E(h) \,( \delta'(x_n) \otimes \phi_0 ) + h^2 E(h) \, ( a(x) ( \delta(x_n) \otimes \phi_0) ) +O_{L^2}(h^{\infty}).
 \end{equation}
 We deal with the simplest term first. From (\ref{FIO1}) and (\ref{FIO2}), it follows that
 \begin{equation} \label{n1}
  \|  h^2 E(h) \, ( a(x) ( \delta(x_n) \otimes \phi_0 ) ) \|_{L^2(\tilde{\Omega})} = O(h^2) \| \phi^{D} \|_{L^2(\partial \Omega)} = O(h^{5/3}), \end{equation}
  
where in (\ref{N1}) we use the $L^2$-restriction bound $ \| \phi^{D} \|_{L^2(\partial \Omega)} = O(h^{-1/3})$ for Neumann eigenfunctions proved by Tataru \cite{Ta}.
Next, we deal with the more singular first term on the RHS  in (\ref{N1}). Write
\begin{equation} \label{n2}
 h^2 E(h) \,( \delta'(x_n) \otimes \phi ) =  h E(h) \,( h \delta'(x_n) \otimes \phi )
 \end{equation}
 and consider the operator $T_N: C^{\infty}(\partial \Omega) \to C^{\infty}(\overline{\Omega})$ given by 
 $$T_{N}(h) \phi^{D} := h E(h) \,( h\delta'(x_n) \otimes \phi).$$
 Writing the distributional integral $h \delta'(x_n) = (2\pi h)^{-1} \int_{\R} e^{i x_n \xi_n/h} \xi_n \, d\xi_n,$ it follows by an argument similar to the one in the Dirichlet case above that the Schwarz kernel of $T_N(h)$ is of the form
 \begin{equation} \label{n3}
 T_N(h)(x,y') = (2\pi h)^{-n} \int_{\R^n} e^{i x_n \xi_n/h} e^{i \langle x'-y',\xi'\rangle/h} \, h \, \tilde{e}(x,\xi,h) \, d\xi' d\xi_n,
 \end{equation}
 with $\tilde{e} \sim \sum_{j=0}^{\infty} \tilde{e}_j h^j$ where
 $$ \tilde{e}_0(x,\xi) = \xi_n \, (1-\psi)(\xi) \, ( |\xi|^2_x - 1)^{-1},$$
 and $\langle \xi \rangle^{j+1} \tilde{e}_j \in S^{0}_{sc}(T^*\tilde{\Omega})$ for all $j=0,1,2,....$ Then, by a  standard stationary phase argument similar to (\ref{FIO2}), $T_N(h)^* T_N(h) \in h^2 \Psi^{0}_{sc}(\partial \Omega),$ and so, by $L^2$-boundedness,
 \begin{equation} \label{n4}
 \| T_N(h) \phi^{D} \|_{L^2(\overline{\Omega})} = O(h) \| \phi^{D} \|_{L^2(\partial \Omega)} = O(h^{2/3}), \end{equation}
 again using that $\| u^D \|_{L^2(\partial \Omega)} = O(h^{-1/3}).$ Consequently, from (\ref{n0}), (\ref{n1}) and (\ref{n4}) it follows that in the Neumann case,
 \begin{equation} \label{Nupshot}
 \| (I- \psi(x,hD)) \phi^0 \|^2_{L^2(\tilde{\Omega})} = O(h^{4/3}).
 \end{equation}

 \end{proof}
 
 \subsubsection{Operator factorization.} To complete the proof of Theorem 1, given the energy localization in Lemma \ref{bdyloc} and  in analogy with the interior case, we $h$-microlocally factorize $P(h)$ in $S_{\epsilon}^*(\tilde{\Omega}).$ In the process, we adapt the resulting integration to the boundary case.
 Consider
 \begin{equation} \label{eigenfnloc}
 v_h^0(x):= \rho \,\psi(hD) \phi_h^0(x); \quad x \in U \subset \tilde{\Omega}.
 \end{equation}
 Then, from Lemma \ref{bdyloc}, it follows that
 \begin{equation} \label{fac1.1}
\| \rho \phi_h^0 - v_h^{0}\|^2_{L^2(U)} = O(h^{\alpha}), \,\,\, \alpha >1.\end{equation}
 
 Following the analysis in the interior case, for sufficiently small open set $U,$ we can cover
 $$S_{\epsilon}^*U \subset \bigcup_{j=1}^N  V_j,$$
 so that
 \begin{equation} \label{loc2}
 p(x,\xi) = e_j(x,\xi)  \, ( \xi_j - a_j(x,\xi') ), \quad e_j(x,\xi) \geq C_j >0, \, \, (x,\xi) \in V_j
 \end{equation}
 where $\xi' = (\xi_1,...,\hat{\xi_j},... \xi_n)$. In fact, one  can make the $V_j$'s more explicit. Setting $N=n$, for $\epsilon >0$ sufficiently small, let
 \begin{equation}\label{explicit}
 V_j = \big\{ (x,\xi) \in S_{\epsilon}^*U; \, |\xi_j| > \frac{1}{2} \big\}; \quad j=1,...,n.
 \end{equation}
 Note that on $V_j$, for the complimentary frequency coordinates $\xi_k; \, k\neq j,$ it follows that $|\xi_k| \leq \frac{1}{2} + \epsilon < 1.$ Let $\rho_j \in C^{\infty}_0(V_j); j=1,...,n$ be a corresponding partition of unity. By applying the $h$-microlocal parametrix for $e_j(x,hD)$ in $V_j$ and arguing as in (\ref{Int0.1}), (\ref{imag1}), (\ref{imag2}) and $(\ref{imag3}),$ it follows that

 \begin{equation} \label{fac1}
 \frac{h}{2} \int_{ [-3,3]^n \cap \overline{\Omega}} \tilde{\chi}_{\mu}(x_j) \gamma(x)   \, \partial_{x_j}  | \rho_j(hD) v^0_h(x) |^2  \, dx_j dx' =  O(h) + O(h^{\alpha}) = O(h).
\end{equation} \

We note that in view of Lemma \ref{bdyloc},  $\alpha >1$ and so, the $O(h^{\alpha})$-error in (\ref{fac1}) can be absorbed in the $O(h)$-error. 
Next, as in the interior case, we integrate by parts in (\ref{fac1}) but here, we must make sure to control for boundary terms. To do this, we treat {\em tangential} and {\em normal} differentiations to the boundary separately.
First, we deal with the tangential cases where $j \in \{ 1,...,n-1\}.$ Then, since $\gamma \in C^{\infty}_0 ([-3,3]^n)$ with $\gamma(x) = \prod_{k=1}^n \gamma(x_k),$
 
 \begin{eqnarray} \label{factan}
  \frac{h}{2} \int_{ [-3,3]^n \cap \overline{\Omega}} \tilde{\chi}_{\mu}(x_j) \gamma(x)   \, \partial_{x_j}  | \rho_j(hD) v^0_h(x) |^2  \, dx  \hspace{2in} \nonumber \\
 = \frac{h}{2} \int_{[0,3] \times [-3,3]^{n-2}} \prod_{k \neq j} \gamma(x_k)  \Big(  \int_{[-3,3]} \tilde{\chi}_{\mu}(x_j) \gamma(x_j)   \, \partial_{x_j}  | \rho_j(hD) v^0_h(x) |^2  \, dx_j  \Big) \, dx'. \end{eqnarray}
 where $[0,3]$ in (\ref{factan}) corresponds to integration in the normal variable $x_n$ and $\gamma(x_j) \in C^{\infty}_0([-3,3]).$ Integration by parts in the iterated  $x_j$-integral follows as before since there are no boundary terms.
 \
 In the normal case where $j=n,$
  \begin{eqnarray} \label{facnormal}
  \frac{h}{2} \int_{ [-3,3]^n \cap \overline{\Omega}} \tilde{\chi}_{\mu}(x_n) \gamma(x)   \, \partial_{x_n}  | \rho_nhD) v_h(x) |^2  \, dx  \hspace{2in} \nonumber \\
 = \frac{h}{2} \int_{[-3,3]^{n-1}} \prod_{k \neq n } \gamma(x_k)  \Big(  \int_{[0,3]} \tilde{\chi}_{\mu}(x_n) \gamma(x_n)   \, \partial_{x_n}  | \rho_n(hD) v_h(x) |^2  \, dx_n \Big) \, dx' \nonumber \\
 \frac{h}{2\mu} \int_{[-3,3]^{n-1}} \prod_{k \neq n } \gamma(x_k)  \Big(  \int_{[0,3]} \tilde{\chi}_{\mu}'(x_n) \gamma(x_n)   \,  | \rho_n(hD) v_h(x) |^2  \, dx_n \Big) \, dx' \nonumber \\
 + \frac{h}{2} \int_{[-3,3]^{n-1}} \prod_{k \neq n } \gamma(x_k)   \tilde{\chi}_{\mu}(3) \gamma(3)   \, | \rho_n(hD) v_h^0(x',3) |^2   \, dx' \nonumber \\
  - \frac{h}{2} \int_{[-3,3]^{n-1}} \prod_{k \neq n } \gamma(x_k)   \tilde{\chi}_{\mu}(0) \gamma(0)   \, | \rho_n(hD) v_h^0(x',0) |^2   \, dx'.
  \end{eqnarray}
  
 However, since  $ \gamma(3) = \tilde{\chi}_{\mu}(0)=0,$ the last two boundary terms in (\ref{facnormal}) both vanish and the rest of the proof then follows as in the interior case.

 \end{proof}

 \section{Non-concentration and $L^\infty$ bounds for eigenfunctions}

 In the case of $C^{\infty}$ compact manifolds without boundary, the eigenfunction sup bounds
 \begin{equation} \label{SUP}
 \| \phi_h \|_{L^\infty(\Omega)}= O(h^{\frac{1-n}{2}})
 \end{equation}
 were proved by Avakumovic \cite{Av} and Levitan \cite{L} in the case of Laplace eigenfunctions and the bound was proved for eigenfunctions of general self-adjoint elliptic operators by H\"{o}rmander \cite{Ho}.
 For Dirichlet or Neumann eigenfunctions on $C^\infty$ manifolds with boundary,  the  analogous bound was proved by Grieser \cite{Gr} using wave methods. 
 In this section, we use Theorem \ref{thm1} to give an alternative entirely local and stationary proof of the eigenfunction sup bounds (\ref{SUP}) for Dirichlet or Neumann eigenfunctions in the case of manifolds with $C^\infty$ boundary.
 
 First, we note that for any $p \in \Omega,$ from the eigenfunction sup bounds $ \| \phi_h  \|_{L^\infty(\Omega)} = O(h^{\frac{1-n}{2} })$ it is immediate that
 $$  \int_{B(p_0,h)} | \phi_h|^2 dvol \leq C_n h^{n} h^{1-n} = C_n h.$$
 
 Thus, on the fundamental wave length scale $\mu = h$, the non-concentration bound in Theorem \ref{thm1} follows immediately from the eigenfunction sup bounds.

 We note that there is actually a converse to this result. In the case where $\partial \Omega = \emptyset,$  the result below was proved by Sogge in \cite{So} (see (3.3) on pg. 391) using wave methods. We extend this result to Dirichlet or Neumann eigenfunctions on manifolds with boundary using a  local, stationary argument.
 
 \begin{theorem} \label{thm2} Let $(\Omega^n,g)$ be a compact $C^\infty$ Riemannian manifold  $ (n \geq 3) $ with or without boundary and $\phi_h$ be an $L^2$-normalized Laplace eigenfunction satisfying Dirichlet or Neumann boundary conditions in the case where $\partial \Omega \neq \emptyset.$ Then, there exist a uniform constant $C = C(\Omega^n,g)$ such that
 $$\| \phi_h \|_{L^{\infty}(\Omega)} \leq C  h^{- \frac{n}{2}}  \,  \Big( \sup_{x \in \Omega}  \| \phi_h \|_{L^2(B(x,h) \cap \Omega)} \Big)$$
 \end{theorem}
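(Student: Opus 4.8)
\emph{Proof strategy.} The plan is to reduce the pointwise bound to a local $L^2$ bound at the wavelength scale $h$ by rescaling, and then to invoke a De Giorgi--Nash--Moser \emph{local} maximum principle for the rescaled equation, using the fact that $\|\phi_h\|_{L^\infty}$ is attained in order to absorb the resulting error term. (Since $\partial\Omega$ is smooth one could instead bootstrap iterated interior/boundary elliptic estimates and apply Sobolev embedding; the argument below has the virtue of using only uniform ellipticity and boundedness of the coefficients.) We may assume $\phi_h$ is real (otherwise argue with $\Re\phi_h$ and $\Im\phi_h$ separately) and that $h$ is small (for $h$ bounded below there are only finitely many eigenvalues and the claim is trivial). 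Let $M:=\|\phi_h\|_{L^\infty(\Omega)}=|\phi_h(p^\ast)|$ for some $p^\ast\in\overline\Omega$, fix a constant $R_0=R_0(\Omega,g)$, and distinguish the cases $\mathrm{dist}(p^\ast,\partial\Omega)\ge R_0 h$ (the interior case, which also covers $\partial\Omega=\emptyset$) and $\mathrm{dist}(p^\ast,\partial\Omega)<R_0 h$.

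\emph{Rescaling (interior case).} Work in geodesic normal coordinates centered at $p^\ast$, so $B(p^\ast,R_0h)$ lies in the chart and inside $\Omega$, and set $y=(x-p^\ast)/h$, $v(y):=|\phi_h|(p^\ast+hy)$. From $\Delta_g\phi_h=-h^{-2}\phi_h$ and Kato's inequality $\Delta_g|\phi_h|\ge -h^{-2}|\phi_h|$ (distributionally), the change of variables (the factors $h^{-2}$ and $h^{n}$ from the two differentiations and the Jacobian cancelling) shows that $v\ge 0$ is a weak subsolution on $B(0,R_0)$ of
\[
  \partial_{y_i}\!\big(\tilde a^{ij}(y)\,\partial_{y_j}v\big)+\tilde a(y)\,v\;\ge\;0,\qquad
  \tilde a^{ij}(y)=\sqrt{|g|}\,g^{ij}(p^\ast+hy),\quad \tilde a(y)=\sqrt{|g|}(p^\ast+hy).
\]
By compactness of $\overline\Omega$ these coefficients are uniformly elliptic and uniformly bounded, independently of $h\le 1$ and of the base point.

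\emph{Local maximum principle and absorption.} The classical local boundedness estimate for subsolutions of uniformly elliptic equations with bounded measurable coefficients (De Giorgi--Nash--Moser theory; see Gilbarg and Trudinger, Theorem 8.17), applied with the zeroth-order term $\tilde a\,v$ treated as an inhomogeneity, gives for any fixed $q>n/2$
\[
  \sup_{B(0,R_0/2)}v\;\le\;C\Big(\|v\|_{L^2(B(0,R_0))}+\|v\|_{L^q(B(0,R_0))}\Big),
\]
with $C$ uniform. Since $p^\ast$ is a global maximum of $|\phi_h|$, one has $\|v\|_{L^\infty(B(0,R_0))}=M=v(0)$; interpolating $\|v\|_{L^q}\le M^{1-2/q}\|v\|_{L^2}^{2/q}$ and using Young's inequality to absorb a term $\tfrac12 M$ yields $M\le C'\|v\|_{L^2(B(0,R_0))}$. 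Undoing the scaling, $\|v\|_{L^2(B(0,R_0))}^2=h^{-n}\|\phi_h\|_{L^2(B(p^\ast,R_0h))}^2$, and covering $B(p^\ast,R_0h)$ by $O(R_0^n)=O(1)$ balls of radius $h$ gives $M\le C'' h^{-n/2}\sup_{x\in\Omega}\|\phi_h\|_{L^2(B(x,h)\cap\Omega)}$, which is the assertion.

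\emph{Boundary case, and the main obstacle.} When $\mathrm{dist}(p^\ast,\partial\Omega)<R_0h$, let $p_0\in\partial\Omega$ be a nearest point and use Fermi coordinates at $p_0$ (valid in a fixed collar, hence on $B(p_0,R_0h)$ for $h$ small), in which $\partial\Omega=\{x_n=0\}$, $g_{nn}\equiv 1$ and $g_{\alpha n}\equiv 0$. Rescaling as above, $v(y)=|\phi_h|(p_0+hy)$ is a subsolution of the same type of equation on the half-ball $\{y_n>0\}\cap B(0,R_0)$, with $v=0$ on $\{y_n=0\}$ in the Dirichlet case and $\partial_{y_n}\phi_h=0$ there (the conormal derivative in these coordinates) in the Neumann case. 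I would then extend across $\{y_n=0\}$ by odd reflection of $\phi_h$ (Dirichlet) resp.\ even reflection (Neumann), together with the correspondingly reflected metric --- now merely bounded, measurable and uniformly elliptic --- so that the reflected function is a weak (sub)solution on the full ball whose modulus is the even reflection of $v$, and apply the interior estimate above at the point $y^\ast=(p^\ast-p_0)/h$, which satisfies $|y^\ast|<R_0$ (adjusting radii by a fixed factor so this lies in the ball where the estimate applies). The absorption step and the covering argument are then unchanged. The step I expect to require the most care is precisely this boundary reduction: verifying that odd/even reflection genuinely produces a weak subsolution of a uniformly elliptic, bounded-coefficient equation across the interface (only Lipschitz after reflection) in both the Dirichlet and Neumann cases; an alternative that sidesteps reflection entirely is to invoke the boundary version of the De Giorgi--Nash--Moser local maximum principle directly.
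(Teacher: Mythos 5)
Your proof is correct and takes a genuinely different route from the paper's. The paper's argument is a hands-on, mean-value-property computation: in geodesic normal coordinates at $x_0$ it applies Green's formula with the explicit kernel $c_n|x_0-y|^{2-n}$, writes $\Delta_0\phi = -h^{-2}\phi + A\phi$ with $\|A\phi\|_{L^2(B_h)} = O(1)\|\phi\|_{L^2(B_{2h})}$ from $g^{ij}=\delta^{ij}+O(|x|^2)$ together with interior elliptic estimates, tames the singularity of $|x_0-y|^{2-n}$ by $n-2$ radial integrations by parts against a cutoff, and bounds each resulting term by Cauchy--Schwarz in $\|\phi\|_{L^2(B_{2h})}$; at the boundary it extends the metric $C^\infty$-smoothly (explicitly \emph{not} by reflection, precisely to preserve enough regularity for those elliptic estimates), reflects $\phi_h$ even/odd in Fermi coordinates, reruns the Green's formula computation at boundary points, and finally covers the $O(h)$ boundary layer with a Grieser-style maximum principle applied to $\cos(2h^{-1}x_n)\phi_h$. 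You replace all of this machinery with a single black box: after rescaling to wavelength scale, Kato's inequality makes $|\phi_h|$ a nonnegative subsolution of a uniformly elliptic, bounded-coefficient divergence-form operator, and the De Giorgi--Nash--Moser local boundedness estimate gives the sup/$L^2$ bound, with a short interpolation--absorption step (valid because you estimate at the global maximizer $p^\ast$) handling the zeroth-order term. Your route is more conceptual, requires less coefficient regularity --- which is exactly why the Lipschitz metric produced by reflection is no obstruction for you, whereas the paper must deliberately avoid it --- and it dispenses with the separate boundary-layer maximum-principle argument; the paper's route is more elementary and explicit, requiring only Green's identities and integration by parts, and makes the source of each power of $h$ transparent. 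Two minor points to tidy in your writeup: (i) in the inhomogeneous form of Gilbarg--Trudinger Theorem 8.17 the term $\tilde a\,v$ enters through $\|\tilde a\,v\|_{L^{q/2}}$ with $q>n$, so the extra norm is $\|v\|_{L^{q/2}}$ with $q/2>n/2$; your ``$q>n/2$'' is a naming slip, the interpolation--absorption is unaffected, and in fact you can avoid absorption altogether by folding the bounded zeroth-order coefficient directly into the operator $L$, since the local boundedness constant depends only on $\|d\|_{L^{q/2}}$; (ii) you should record the standard transmission-condition check that even (Neumann) or odd (Dirichlet) reflection of $\phi_h$ together with the evenly reflected Lipschitz metric yields a weak (sub)solution across $\{y_n=0\}$ --- this goes through precisely because in Fermi coordinates $g_{n\alpha}\equiv 0$, $g_{nn}\equiv 1$, so the conormal derivative coincides with $\partial_{y_n}$ and matches across the interface in both cases.
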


 \begin{proof}
 
 We first prove this outside an $h$-width Fermi neignbourhood of the boundary given by
 $$ \Omega_{int}(h) := \{ x \in \Omega;  d(x,\partial \Omega) \geq h \}.$$

\subsection{Sup bounds at interior points} \label{interior points} We first assume here that  $p_0 \in \Omega_{int}(h)$  and let $x: B \to \R^n$ be geodesic normal coordinates defined in an open ball $B = B(p_0,\ep_0)$with $x(p_0) = 0.$ In the following, we let $\Delta_0 = \sum_{j} \partial_{x_j}^2$ be the local flat Laplacian in the $x$-coordinates and note that in the small ball $B_h = \{ |x| \leq h \},$ there exists a second-order differential operator $A(x,D_x)$ with
 
 \begin{eqnarray} \label{laplaceapprox}
 \Delta \phi = \Delta_0 \phi + A(x,D_x)\phi,  \hspace{1in} \nonumber \\
 \| A(x,D_x) \phi \|_{L^2(B_h)} = O(h^2) \| \phi \|_{H^2(B_h)} + O(h) \| \phi \|_{H^1(B_h)},
 \end{eqnarray}
 where the last line in (\ref{laplaceapprox}) follows from the fact that $g^{ij}(x) = \delta_j^i + O(|x|^2)$ with $\nabla g^{ij}(x) = O(|x|).$

 From standard elliptic estimates

 \begin{equation} \label{ELLIPTIC}
 \| \phi \|_{H^{s+2}(B_h)} \leq C_1 \| (- \Delta + I) \phi \|_{H^s(B_{2h})} + C_2 \| \phi \|_{L^2(B_{2h})} \end{equation}\
  and the non-concentration bound $\| \phi \|_{B_{2h}}^2 = O(h)$
 it then follows from (\ref{laplaceapprox}) that
 
 \begin{equation} \label{laplaceapprox2}
 \| A(x, D_x) \phi \|_{L^2(B_h)}  = O(1) \| \phi_h \|_{L^2(B_{2h})} = O(h^{1/2}).
 \end{equation}\

Assume  that $n \geq 3.$ Given $x_0 \in B$ we 
 set $B_h(x_0):= \{ y; |y-x_0|  \leq  h \}$ and $S_h(x_0):= \{ y; |y-x_0| = h \}$ in the following.  Since $\phi$ is approximately harmonic on scale $\leq h,$ we mimic the proof of the mean value theorem to derive the required a priori estimates. 
An application of Green's formula with $\phi(y)$ and the local Green's function $G(x_0,y) = c_n |x_0 -y|^{2-n}$  with $(\Delta_0)_y G(x_0,y) = \delta_{x_0}(y)$ gives

 \begin{eqnarray} \label{MVT1}
 \phi(x_0) - \int_{B_h(x_0)} \Delta_0 \phi (y) \,\, G(x_0,y) \, dy \nonumber \\
 = c_n' h^{1-n}  \int_{S_h(x_0)} \phi(y) d\sigma(y) - c_n h^{2-n} \int_{S_h(x_0)} \partial_r \phi(y) d\sigma(y),
 \end{eqnarray}
 where $c_n' = (2-n) c_n$ and $r= |y-x_0|.$

  Substitution of (\ref{laplaceapprox}) and (\ref{laplaceapprox2}) in the LHS of (\ref{MVT1}) gives  
 
 \begin{eqnarray} \label{MVT1.1}
 \phi(x_0) + c_n h^{-2} \int_{B_h(x_0)}  \phi (y) |x_0-y|^{2-n} dy + c_n \int_{B_h(x_0)}  A \phi (y) |x_0 - y|^{2-n} \, dy \nonumber\\
 = c_n' h^{1-n} \int_{S_h(x_0)} \phi(y) d\sigma(y) - c_n h^{2-n} \int_{S_h(x_0)} \partial_r \phi(y) d\sigma(y), 
 \end{eqnarray}

 By another application of Green and (\ref{laplaceapprox2}), 
  \begin{eqnarray} \label{MVT2}
 - \int_{S_h(x_0)} \partial_{r} \phi(y) d\sigma(y) = - \int_{B_h(x_0)} \Delta \phi (y) dy + \int_{B_h(x_0)} A\phi (y) dy \nonumber \\
 = h^{-2} \int_{B_h(x_0)} \phi(y) dy  + O(h^{\frac{n}{2}} \| \phi_h \|_{L^2(B_{2h})}). \end{eqnarray}

 Substitution of (\ref{MVT2}) in (\ref{MVT1.1}) gives
 
  \begin{eqnarray} \label{MVT3}
 \phi(x_0) + c_n h^{-2} \int_{B_h(x_0)} \phi(y) |x_0-y|^{2-n} dy + c_n \int_{B_h(x_0)}  A \phi (y) |x_0 - y|^{2-n} \, dy\nonumber \\
 = c_n' h^{1-n} \int_{S_h(x_0)} \phi(y) d\sigma(y) + c_n h^{2-n} h^{-2} \int_{B_h(x_0)} \phi(y) dy + O(h^{2-\frac{n}{2}} \| \phi_h \|_{L^2(B_{2h})}).
 \end{eqnarray}

 We rewrite the first term on the RHS of (\ref{MVT3}) as a ball integral using Green yet again with functions $\phi(y)$ and $\rho(y) = |y-x_0|^2.$ This gives  
 $$ - h^{-2} \int_{B_h}  \phi \, \rho  dy - 2 \int_{B_h}  \phi dy  = h^2 \int_{S_h} \partial_{r} \phi d \sigma(y) - 2 h \int_{S_h} \phi d \sigma (y) $$

 $$ \hspace{2in}= - h^2 h^{-2}  \int_{B_h} \phi dy - 2 h  \int_{S_h}  \phi d \sigma(y) + O(h^2 h^{ \frac{n}{2}} \| \phi_h \|_{L^2(B_{2h})}). $$
 where in the last line we have input (\ref{MVT2}).

Solving for $h^{1-n} \int_{S_h} \phi \, d \sigma$ gives:

 \begin{eqnarray} \label{sphere int}
 2h^{1-n} \int_{S_h(x_0)}  \phi  d \sigma  = h^{-2-n}\, \int_{B_h(x_0)} \phi  \cdot  \rho dy  + 2 h^{-n} \,  \, \int_{B_h(x_0)} \phi dy \nonumber \\
  - h^{-n} \int_{B_h(x_0)} \phi dy + O(h^{-\frac{n}{2}} h^2  \| \phi_h \|_{L^2(B_{2h})}). 
 \end{eqnarray}
 
 Finally, substitution of (\ref{sphere int}) in (\ref{MVT3}) gives

 \begin{eqnarray} \label{UPSHOT}
 \phi(x_0) =  -c_n h^{-2} \int_{B_h(x_0)} \phi(y) |x_0-y|^{2-n} dy - c_n\int_{B_h(x_0)} A \phi (y) |x_0-y|^{2-n} \, dy \nonumber \\
  +  \frac{c_n'}{2} h^{-2-n} \, \int_{B_h(x_0)} \phi(y) \cdot  \rho(y) \, dy  -  c_n'  \frac{h^{-n}}{2} \int_{B_h(x_0)} \phi(y) dy  +\nonumber \\
    + c_n' h^{-n} \int_{B_h(x_0)}  \phi(y) dy + c_n h^{-n} \int_{B_h(x_0)} \phi(y) dy + O(h^{-\frac{n}{2}} h^2  \| \phi_h \|_{L^2(B_{2h})}). 
 \end{eqnarray} \\
  
  To estimate the first integral on the RHS, we introduce polar variables with $r(y) = |x_0-y|$ and integrate by parts $n-3$ times with respect to $\partial_r.$ More concretely, let $\chi \in C^{\infty}_0(B_1)$ be a radial cutoff with $\chi= \chi(r)$  satisfying  $\chi |_{B_{1/2}} = 1$ and $0 \leq \chi \leq 1.$

Then, setting  $I_1(h):=  h^{-2}  \int_{B_h(x_0)} \phi(y) |x_0 -y|^{2-n} dy,$  we write

\begin{equation} \label{first term}
 I_1(h) = h^{-2} \int_{B_h} |y-x_0|^{2-n} \phi_h(y) \, \chi(h^{-1}r) dy   +  h^{-2} \int_{B_h} |y-x_0|^{2-n} \phi_h(y) \, (1-\chi)(h^{-1}r) dy        \end{equation}

 To bound the second term on the RHS of (\ref{first term}), we apply Cauchy-Schwarz to get
 
 \begin{eqnarray} \label{first term.2}
h^{-2} \Big|   \int_{B_h} |y-x_0|^{2-n} \phi_h(y) \, (1-\chi)(h^{-1}r) dy  \Big| \leq C_n  h^{-n} \| \phi_h \|_{L^1(B_h)} \nonumber \\
\leq C_n h^{-n} | B_h|^{1/2} \, \| \phi \|_{L^2(B_h)} \leq C_n h^{-\frac{n}{2}} \| \phi \|_{L^2(B_h)}. \end{eqnarray}

As for the first term on the RHS of (\ref{first term}) we note that $f(y) = |y-x_0|^{2-n} \notin L^2_{loc}$ when $n >3.$ To remedy this, we pass to polar coordinates with $r(y) = |y-x_0|$ and  integrate by parts $(n-2)$-times with respect to $\partial_r.$ Since boundary term vanishes at $r=0,$

\begin{equation} \label{ibp}
  h^{-2} \int_{B_h} |y-x_0|^{2-n} \phi_h(y) \, \chi(h^{-1}r) dy = C_n h^{-2} \int_{B_h} \, \partial_r^{n-2} \big(   \phi_h(y) \chi(h^{-1} r) \big) \, dy,
 \end{equation}
and  by Cauchy-Schwarz,

\begin{equation} \label{parts}
 \Big| C_n h^{-2} \int_{B_h} \, \partial_r^{n-2} \big(   \phi_h(y) \chi(h^{-1} y) \big) \, dy \Big| \leq C_n h^{-2}  \Big( \sum_{k=0}^{n-2} h^{-k} \| \phi \|_{H^{n-2-k}(B_h)} \Big) \, | B_h |^{\frac{1}{2}}. \end{equation}

From the elliptic estimates (\ref{ELLIPTIC}), $  \| \phi \|_{H^{n-2-k}(B_h)} = O(h^{-(n-2-k})) \| \phi \|_{L^2(B_{2h})} $ and so,
from (\ref{parts}) it follows by another application of Cauchy-Schwarz that
\begin{align} \label{first term.1}
\Big| h^{-2} \int_{B_h} |y-x_0|^{2-n} \phi_h(y) \, \chi(h^{-1}y) dy \Big| & \leq C_n h^{-2} h^{2-n} \, | B_h |^{\frac{1}{2}} \, \| \phi \|_{L^2(B_{2h}) } \\
&= O(h^{-\frac{n}{2}})  \| \phi \|_{L^2(B_{2h}) }. \nonumber
\end{align}

Consequently, from (\ref{first term.1}) and (\ref{first term.2}),  $I_1(h) = O(h^{-\frac{n}{2}}  \| \phi \|_{L^2(B_{2h}) } ).$

 The second integral is estimated in the same way using integration by parts  with respect to $\partial_{r}^{n-2}$, Cauchy -Schwarz and the  elliptic estimates
 
 $$  \| A \phi \|_{H^{n-2-k}(B_h)} = \O( h^{2}) \| \phi \|_{H^{n-k}(B_h)} = O(h^{-(n-2-k}) \| \phi \|_{L^2(B_{2h})}. $$

The result is that
\begin{eqnarray} \label{error}
\Big|   \int_{B_h} A \phi (y) |x_0-y|^{2-n} dy  \Big| \leq C_n   \Big( \sum_{k=0}^{n-2} h^{-k} \| A\phi \|_{H^{n-2-k}(B_h)} \Big) \, | B_h |^{\frac{1}{2}}  \nonumber \\
 \leq C_n h^{2-n}  |B_h|^{\frac{1}{2}}  \| \phi \|_{L^2(B_{2h}) } = O(h^{-\frac{n}{2}} h^2  \| \phi \|_{L^2(B_{2h}) } ). \end{eqnarray}

The rest of the terms  on the RHS of (\ref{UPSHOT}) are all estimated in a similar fashion. By Cauchy-Schwarz and Theorem \ref{thm1},

$$  h^{-n} \Big| \int_{B_h(x_0)} \phi(y) dy  \Big| \leq C_h h^{-n}  \, | B_h(x_0)|^{1/2} \, \| \phi \|_{L^2(B_h)}$$
$$ \leq C_n h^{-n} h^{\frac{n}{2} }  \| \phi \|_{L^2(B_{2h}) } .$$

In addtion, since $ \max_{y \in B_h} \rho(y) \leq h^2,$

$$ h^{-2-n}\, \int_{B_h(x_0)} \phi \cdot  \rho \, dy =   O(h^{-\frac{n}{2}}  \| \phi \|_{L^2(B_{2h}) } ).$$

The upshot is that $| \phi(x_0) |  = O(h^{-\frac{n}{2}}   \| \phi \|_{L^2(B_{2h}) } )$ for any $x_0 \in \Omega_{Int}(h)$ and since this bound is uniform, it follows that

\begin{equation} \label{intsupbounds}
\sup_{x  \in \Omega_{Int}(h)}  |\phi(x)|  \leq C_n h^{-\frac{n}{2} } \, \sup_{x \in \Omega_{Int}(h)} \| \phi \|_{L^2(B_{h}(x)) } = O(h^{\frac{1-n}{2} }).
\end{equation}

\subsection{Sup bounds at  boundary points $x \in \partial \Omega.$}  Since in the Dirichlet case, $\phi |_{\partial \Omega} = 0$, only the Neumann case is of interest here. 
We let $(y',y_n): U \to \R^n$ denote Fermi coordinates in  a collar neighbourhhod $U \subset \tilde{\Omega}$ of $\partial \Omega = \{ y_n = 0 \}.$ Given $x \in \partial \Omega,$ we consider the set
$$B_h^+ := \{ y \in \overline{\Omega}; \, r(x,y) \leq h, \, y_n \geq 0 \} $$ 
written in Fermi coordinates.

 For convenience, in the following we denote the reflection map $\iota: B^{\pm} \to B^{\mp}$ by $\iota(y',y_n) = (y',-y_n)=: y^*.$ We note that under the assumption here that $x \in \partial \Omega,$ the reflection map $y \to y^*$ maps $B_{h}^{\pm} \to B_{h}^{\mp}$ where $B_h = B_h^+ \cup B_h^- $ is contained in the extension $\tilde{\Omega} \supset \Omega$ and the metric $g$ is extended smoothly across the boundary. In particular, we note that 
 $$r(x,y) = r(x,y^*), \,\, \forall x \in \partial \Omega.$$

\begin{remark} We note that in the above the metric extensions to $\tilde{\Omega}$ do {\em not} involve reflection in the boundary to avoid the usual issues with only Lipschitz regularity of the metric. Indeed, here we simply extend $g$ in a smooth fashion across the boundary $\partial \Omega.$ and then denote the corresponding Fermi coordinates by $(y',y_n).$
\end{remark}

Since by assumption $B_h \cap \partial \Omega \neq \emptyset$, in this case it is more convenient to work with the invariant Laplacian in Fermi coordinates in $B_h^+$ rather than passing to the flat Laplacian in geodesic normal ccordinates, as in the interior case. Given $x \in \partial \Omega(h),$
to estimate $|\phi(x)|$ we apply Green's formula with the Neumann Green's function $G_N(x,y).$  Before we do this, for the benefit of the reader, we begin with some preliminaries on the local asymptotics of the  Dirichlet (resp. Neumann) Green's functions $G_{D}(x,y)$ (resp. $G_N(x,y).$ This material is well-known to experts but is difficult to find in the literature in the form we will need here, so we give the details below.

\subsubsection{Local near-diagonal asymptotics of $G_{D,N}$}
Given $x \in M,$ let  $U \subset M$ be open set with $ d(U,x)  < inj(M,g)$ and  consider the local free Green's function $G(x,\cdot) \in {\mathcal D}'(U)$ satisfying
$$ -\Delta_y G(x,y) = \delta_x(y), \quad y \in U.$$
First, let $E \in \Psi^{-2}_{cl}$ be a  parametrix for $\Delta$ with
$$\Delta \cdot E = I + K, \quad K \in \Psi^{-\infty}.$$
Then, since, in particular,  $K(x,y) \in C^{\infty}(U\times U),$ it follows that for $U$ sufficiently small, $ \| K \|_{L^2(U) \to L^2(U)} \leq \frac{1}{2}$ and so, $(I + K)^{-1}$ has a local norm-convergent Neumann series expansion with
$ (I + K)^{-1} = \sum_{j=0}^{\infty} (-1)^j K^j $ as operators from $L^2(U)$ to $L^2(U).$ Consequently, it follows that for $U$ small,
\begin{equation} \label{locgreen}
G = E (I + K)^{-1} = E + K'; \quad K' \in \Psi^{-\infty}(U).
\end{equation}
To write $G(x,y)$ locally near the diagonal $x=y,$  modulo smoothing operators it is  enough  to write the near-diagonal conormal expansion in powers of $r(x,y)$  for the kernel of the parametrix $E(x,y)$ with $(x,y) \in U \times U. $ Modulo $R(x,y) \in C^{\infty}(U \times U),$ one can write this as a distributional integral

\begin{equation} \label{green1}
E(x,y) = (2\pi)^{-n} \int_{T^*_x U } e^{-i \exp_x^{-1}(y) \cdot \xi} \, a(x,\xi) \, d\xi,
\end{equation}
where, $a \sim \sum_{j=0}^{\infty} a_j$ where $a_j \in S^{-j}_{cl}(T^*U)$ and $a_0(x,\xi) = |\xi|_x^{-2}.$

Making a change to polar coordinates in (\ref{green1}) with $\rho = |\xi|_x$ and $\omega \in S_x^*,$  it follows that the distributional integral in (\ref{green1}) can be written in the form
\begin{equation} \label{green2}
G_0(x,y) = \int_{0}^{\infty}  \int_{S_x^*} e^{i \rho \, r(x,y) \langle \omega_0,\omega \rangle} \, d\omega_x \rho^{n-3}  d\rho, \end{equation}
where $\omega_0(x,y):= \frac{ \exp_x^{-1}(y) }{r(x,y)}.$
We make the radial change of coordinates $\rho \mapsto \rho \, r(x,y) =s$ in (\ref{green2}) to get
\begin{equation} \label{green3}
G_0(x,y) = r(x,y)^{2-n}  \int_{-\infty}^{\infty} \Big( \int_{S^*_x} e^{is \langle \omega_0, \omega \rangle } \, d\omega_x \Big) s_{+}^{n-3}  \, ds. \,  \end{equation}
Let $\chi_{+} (u) \in C^{\infty}(\R)$ equal $1$ in $[1,\infty )$ with supp $\chi_+ \subset [1/2,\infty),\, \chi_{-} \in C^{\infty}(\R)$ with supp $\chi_{-} \subset (-\infty, -1/2]$ and $\chi_{-}$ equal to $1$ on $(-\infty, - 1].$ Finally, we  choose $\chi \in C^{\infty}_0((-1,1))$ with $\chi |_{[-1/2,1/2]} = 1$ and so  that  $\chi_{-1} + \chi_{+} + \chi \equiv 1.$
 We split the integrand in (\ref{green3}) into the sum
\begin{eqnarray} \label{split}
 \int_{0}^{\infty} \int_{S_x^*} e^{i s \langle \omega_0, \omega \rangle } \, \chi_{-}( \langle \omega_0, \omega \rangle ) \, d\omega_x  s_+^{n-3} ds  +  \int_{0}^{\infty} \int_{S_x^*}  e^{i s \langle \omega_0, \omega \rangle } \, \chi_+( \langle \omega_0, \omega \rangle ) d\omega_x \, s_+^{n-3} ds \nonumber \\
+  \int_{0}^{\infty} \int_{S_x^*}  e^{ is \langle \omega_0, \omega \rangle } \,  \chi( \langle \omega_0, \omega \rangle )  \, d\omega_x s_+^{n-3} ds=: F_+(x) + F_{-}(x) + F(x).
\end{eqnarray}
Next, we write
$$F_{\pm}(x) = \lim_{\epsilon \to 0^+} \int_{0}^{\infty} \int_{S_x^*} e^{(\mp \epsilon + i) s   \langle \omega_0, \omega \rangle } \, \chi_{\pm}( \langle \omega_0, \omega \rangle ) \, d\omega_x s_+^{n-3} ds$$
$$= c_n \int_{S_x^+} \chi_{\pm}( \langle \omega_0, \omega \rangle ) \, |\langle \omega_0, \omega \rangle |^{2-n} \, d\omega_x, $$

where the last line  above follows by a contour deformation argument as in \cite{Ho2}(see Example 7.1.17 on pg. 167).

 Finally, since the spherical measure $d\omega_x$ is invariant under rotation, in the above $\omega_0 = \frac{ exp_x^{-1}y}{ r(x,y)} $ can be replaced by a fixed point (say $\theta_0 = (0,...,0,1) )$ in $ S_{x}^*.$ 

Consequently,
$$F_{\pm}(x) = c_n \int_{S_x^+} \chi_{\pm}( \langle \theta_0, \omega \rangle ) \, |\langle \theta_0, \omega \rangle |^{2-n} \, d\omega_x, $$
and the fact that $F_{\pm} \in C^{\infty}(U)$ follows since $d\omega_x$ is locally $C^{\infty}$ in $x.$

Finally, we write
\begin{eqnarray} \label{F}
F(x) = \int_{0}^{1}  \int_{S_x^*}  e^{ i s \langle \theta_0, \omega \rangle } \,  \chi( \langle \theta_0, \omega \rangle )  \, d\omega_x s_+^{n-3} ds +  \int_{1}^{\infty} \int_{S_x^*}  e^{i s \langle \theta_0, \omega \rangle } \,  \chi( \langle \theta_0, \omega \rangle )  \, d\omega_x s_+^{n-3} ds. \end{eqnarray}
The first integral is in $C^{\infty}(U)$ by differentiation under the integral sign since $d\omega_x$ is smooth for $x \in U.$ In the second integral on the RHS of (\ref{F}), we note that  the phase function $\Phi \in C^{\infty}(S_x^*)$ given by
$$ \Phi(\omega) = \langle \theta_0, \omega \rangle, $$
has critical points at $\omega_c = \pm \omega_0$ where $\langle \omega_c, \omega_0 \rangle = \pm 1$ and so,

$$ \nabla_{\omega} \Phi(\omega) \neq 0, \,\,\,\text{when} \,\, \,  \langle \theta_0, \omega \rangle \in \text{supp} \, \chi.$$
So, by repeated integration by parts in $\omega$, this term is of the form
$$  \int_{1}^{\infty} \int_{S_x^*}  e^{i s \langle \theta_0, \omega \rangle } \,  \chi( \langle \theta_0, \omega \rangle )  \,{\mathcal O}_N(s^{-N})\, d\omega_x   ds$$
 for any $N >1$ and this  is also in $C^{\infty}(U).$

The upshot is that (\ref{green3}) can be written in the form

\begin{eqnarray} \label{green4}
G_0(x,y) = c_n r(x,y)^{2-n} F_0(x)\end{eqnarray}
where $F_0 \in C^{\infty}(U).$

To analyze the lower-order terms  in $E(x,y)$ in (\ref{green1}) given by
$$G_j(x,y):=  (2\pi)^{-n} \int_{T^*_x U } e^{-i \exp_x^{-1}(y) \cdot \xi} \, a_j(x,\xi) \, d\xi, \,\,\, j=1,2,3,...$$
we argue as for the $G_0$-term above noting that $a_j(x,\xi )$ is positive homogeneus of degree $-2-j.$ The result is that
\begin{eqnarray} \label{green5}
G_j(x,y) = r(x,y)^{2-n+j} F_j(x), \,\,\, F_j \in C^{\infty}(U), \,\, j=0,1,2,....
\end{eqnarray}

\begin{definition} \label{con}
 Let $(r(y), \omega(y)) \in \R^+ \times { {\mathbb S}}^{n-1}$ denote polar coordinates in the $y$-variables centered at $x \in U.$ We say that $a \in {\mathcal D}'(U \times U)$ is {\em conormally bounded} if
$$ \sup_{(x,y) \in U \times U}| (r_y \partial_{r_y})^{\alpha} \partial_{\omega_y}^{\beta} a(x,y) |   \leq C_{\alpha, \beta} < \infty.$$

\end{definition}
 
In view of (\ref{locgreen}) and (\ref{green5}) above, we have proved the following

\begin{proposition} \label{hadamard}
When $n \geq 3$ and $U\subset \Omega$ is a sufficiently small open set, the local Green's functions $G(x,y) \in {\mathcal D}'(U \times U)$ has the near-diagonal Hadamard parametrix form
$$ G(x,y) \sim r(x,y)^{2-n} \sum_{j=0}^{\infty} F_j(x)  \, r(x,y)^{j} + R(x,y),$$
in the sense that
$ G(x,y) - r^{2-n}(x,y) \sum_{j=0}^{N} F_j(x) r^j(x,y) = R^{(M)}(x,y) + R(x,y),$ where $ r^{-N-2 +n} R^{(M)}$  is conormally bounded with  $F_j \in C^{\infty}(U)$ for $ j=0,1,2,...$ and $R \in C^{\infty}(U\times U).$
\end{proposition}

For convenience, we denote partial sums in the formal Hadamard series for $G$ by
\begin{equation} \label{M}
G^{(M)}(x,y):= r^{2-n}(x,y) \Big( \sum_{j=0}^M F_j(x) r(x,y)^j \Big).
\end{equation}

Then, by reflection in the boundary,
$$ 2 G_{D}(x,y) = G(x,y) - G(x,y^*), $$
$$2 G_{N}(x,y) = G(x,y) + G(x,y^*)$$ 
where $(x,y) \in U \times U.$ 
Similarily, we denote the respective $M$-th order Hadamard approximations by $2 G^{(M)}_D(x,y) := G^{(M)}(x,y) - G^{(M)}(x,y^*)$ and $2 G^{(M)}_N(x,y):= G^{(M)}(x,y) + G^{(M)}(x,y^*).$

In our analysis, we will need the following immediate corollary of Proposition \ref{hadamard} for $G_{D,N}.$

\begin{proposition} \label{hadamard2}
 When  $(x,y) \in U \times U$ and $U \subset \Omega$ is a sufficiently small open neighbourhood   the local Green's functions $G_{D,N}(x,y) \in {\mathcal D}'(U \times U)$  satisfy
$$ 2 G_{D}(x,y) \sim r(x,y)^{2-n} \sum_{j=0}^{\infty} F_j(x)  \, r(x,y)^{j}  -  r(x,y^*)^{2-n} \sum_{j=0}^{\infty} \tilde{F}_j(x)  \, r(x,y^*)^{j} + R_{D}(x,y)$$

where $R_{D} \in C^{\infty}(U \times U)$ and $F_j, \tilde{F}_j \in C^{\infty}(U).$  In analogy with the free case in Proposition \ref{hadamard}, we denote the truncated sum at $j=M$ by $2 G_{D}^{(M)}(x,y).$ \\

\noindent Similarily,
$$ 2 G_{N}(x,y) \sim r(x,y)^{2-n} \sum_{j=0}^{\infty} F_j(x)  \, r(x,y)^{j} +  r(x,y^*)^{2-n} \sum_{j=0}^{\infty} \tilde{F}_j(x)  \, r(x,y^*)^{j} + R_{N}(x,y),$$
where $R_{N} \in C^{\infty}(U \times U)$ and $F_j, \tilde{F}_j \in C^{\infty}(U).$  We denote the truncated sum at $j=M$  by $2 G_N^{(M)}(x,y).$
\end{proposition}

 Fix  $x \in  \partial \Omega$ and $\phi_h$ be a Neumann eigenfunction.  By an application of Green's formula in $B_h^+(x)$ 
 \begin{align} \label{UPSHOTBDY}
 \phi(x)  = & c_n h^{-2} \int_{B^+_h(x)} \phi(y) G_N(x,y) dy  \nonumber \\
 & + \int_{\partial B_h^+(x)} \partial_{\nu(y)} \phi(y) \cdot G_N(x,y) \, d\sigma(y)  \nonumber \\
&  - \int_{\partial B_h^+(x)}  \phi (y)  \partial_{\nu(y)} G_N(x,y) \, d\sigma(y)
\end{align} \\

From Propositions \ref{hadamard}, \ref{hadamard2} we estimate the first term  on the RHS of (\ref{UPSHOTBDY}) as in (\ref{ibp}) by substituting the Hadamard approximation $G_N^{(M)}.$ 

\begin{eqnarray} \label{intterm}
 h^{-2} \int_{B^+_h(x)} \phi(y)   G_N(x,y)  dy  \hspace{2in}    \nonumber \\
 =  h^{-2} \int_{B_h^+(x)} \phi(y) G_{N}^{(M)}(x,y) dy +   h^{-2} \int_{B_h^+(x)} \phi(y) R_{N}^{(M)}(x,y) dy \nonumber \\
 =   h^{-2} \int_{B_h^+(x)} \phi(y) G^{(M)}(x,y) dy +   h^{-2} \int_{B_h^+(x)} \phi(y) R^{(M)}(x,y) dy.
 \end{eqnarray}
 In the last line of (\ref{intterm}) we have used that when $x \in \partial \Omega, \,\, r(x,y',y_n) = r(x,y',-y_n) $ and so, $G^{(M)}_N(x,y) = G^{(M)}(x,y).$ 
 
For the last remainder term in (\ref{intterm}), choosing $M >n,$

$$  h^{-2} \int_{B_h^+(x)} \phi(y) R^{(M)}(x,y) dy = O(h^{-2}) \int_{B_h^+(x)} \phi(y) O(r^{M+2-n}) dy$$
$$ = O(h^{M-n}) \| \phi_h \|_{B_h^+} |B_h|^{1/2} = O(h^{M-\frac{n}{2}}) \| \phi_h \|_{B_h^+}, $$

which is residual. 

The first integral term in the last line of (\ref{intterm}) is controlled by a radial  integration by parts argument as in (\ref{parts}) with resulting bound

  $$ h^{-2} \int_{B_h^+(x)} \phi(y) G^{(M)}(x,y) dy = O(h^{-n/2}) \| \phi_h \|_{B_{2h}^+(x)},$$
  
  and so
  \begin{equation} \label{inttermbound}
   h^{-2} \int_{B^+_h(x)} \phi(y)   G_N(x,y)  dy = O(h^{-n/2}) \| \phi_h \|_{B_{2h}^+(x)}.
   \end{equation}

To estimate the boundary terms in (\ref{UPSHOTBDY}), we decompose  $\partial B_h^+(x) = ( \partial B_h^+(x) \cap \partial \Omega) \cup S_h^+(x),$
where $S_h^+(x) = \{ y \in \Omega; r(x,y) = h, \, y_n >0\}$ is the hemispherical part of the boundary. 

In the Neumann case, $\partial_{\nu(y)} \phi(y)  = 0$ for all $y \in \partial B_h^+(x) \cap \partial \Omega$  and so,

\begin{equation} \label{bdy1}
\int_{\partial B_h^+(x)} \partial_{\nu(y)} \phi(y) \cdot G_N(x,y) \, d\sigma(y). = \int_{S_h^+(x)} \partial_{\nu(y)} \phi(y) \cdot G_N(x,y) \, d\sigma(y), \quad x \in \partial \Omega.
\end{equation}

In the second boundary integral  on the RHS of  (\ref{UPSHOTBDY}) we use that  $\partial_{\nu(y)} G_{N}(x,y) = 0$ for all $y \in \partial \Omega$ to get that 
\begin{equation} \label{bdy2}
 \int_{\partial B_h^+(x)} \phi(y) \cdot  \partial_{r_y} G_N(x,y) \, d\sigma(y)  = \int_{S_h^+(x)} \phi(y) \cdot  \partial_{r_y} G_{N}(x,y) \, d\sigma(y), \quad x \in \partial \Omega.
 \end{equation}

 We now subsitute the Hadamard approximation $G^{(M)}_N$ in (\ref{UPSHOTBDY}).  Since $x \in \partial \Omega,$ it follows that  $r(x,y) = r(x,y^*)$ for $y \in \partial B_h(x)$ and so, in particular, from Proposition \ref{hadamard2}, 
 $$ G^{(M)}_N(x,y) = G^{(M)}(x,y), \quad x \in \partial \Omega, \,\, y \in \partial B_h^+(x).$$

Consequently,  one can write the boundary contribution in  (\ref{UPSHOTBDY}) in the form
 
 \begin{eqnarray} \label{sub}
  = \int_{S_h^+} \partial_{\nu_y} \phi(y) \cdot G^{(M)}(x,y) \, d\sigma(y)  - \int_{S^+_h} \phi(y) \cdot  \partial_{\nu_y} G^{(M)}(x,y) \, d\sigma(y) \nonumber \\
  + \int_{\partial B_h^+} \partial_{\nu_y} \phi(y) \cdot R^{(M)}(x,y) \, d\sigma(y)  - \int_{\partial B_h^+} \phi(y) \cdot  \partial_{\nu_y} R^{(M)}(x,y) \, d\sigma(y) \nonumber \\
 + \int_{\partial B_h^+} \partial_{\nu(y)} \phi(y) \cdot R(x,y) \, d\sigma(y)  - \int_{\partial B^+_h} \phi(y) \cdot  \partial_{\nu_y} R(x,y) \, d\sigma(y).
 \end{eqnarray}

A key point here is that when $ y \in S_h^+,$
\begin{eqnarray} \label{const1}
G^{(M)}(x,y) = r^{2-n}(x,y)  \Big( \sum_{j=0}^{M} F_j(x) r^{j}(x,y) \Big)  = h^{2-n}  \Big( \sum_{j=0}^M F_j(x) h^j \Big),
\end{eqnarray}

which is {\em constant} as a function of $y \in S_h^+.$

Also, since $\partial_{\nu(y)} = (1 +  f(x,y) ) \partial_r,$ with $f \in C^{\infty},$ direct computation gives with $r= r(x,y),$
\begin{eqnarray} \label{const2}
\partial_{\nu(y)} G^{(M)}(x,y) = r^{1-n}\Big( \sum_{j=0}^{M}  (2-n+j) F_j(x) r^{j}\Big)  \,  ( 1 + f(x,y)) \nonumber \\
= h^{1-n} \Big( \sum_{j=0}^{M}  (2-n+j) F_j(x) h^{j}\Big) \,( 1 +  f(x,y) ) \quad \text{when} \,\, y \in S_h^+,
\end{eqnarray}
with $f, F_j \in C^{\infty}_{loc}$ for all $j=0,1,2,....$ In the following, we set $k(x,y) = ( 1 + f(x,y)) \in C^{\infty}_{loc}$
where $|f(x,y)| = O(r).$

Since $\partial_{\nu(y)} \phi_h(y) = 0$  for $ y \in \partial \Omega,$  in view of (\ref{const1}), 

\begin{eqnarray} \label{N1}
\int_{S_h^+(x)} \partial_{\nu(y)} \phi(y) \cdot G^{(M)}(x,y) \, d\sigma(y) = h^{2-n}  \, \big( \sum_{j=0}^M F_j(x) h^j  \big)\, \int_{ \partial B_h^+(x)} \partial_{\nu} \phi(y) \, d\sigma(y) \nonumber\\
= O(h^{2-n}))  \int_{B_h^+} \Delta_y \phi(y) dy = O(h^{-n}) |B_h|^{1/2} \| \phi \|_{B_h^+}= O(h^{-n/2}) \| \phi \|_{B_h^+},  \end{eqnarray}
where the last  line of (\ref{N1}) follow from Green's formula and Cauchy-Schwarz.

For the second integral in the first line of  (\ref{sub}),
\begin{eqnarray} \label{N2}
\int_{S^+_h(x)} \phi(y) \cdot  \partial_{\nu_y} G^{(M)}(x,y) \, d\sigma(y) \hspace{2in} \nonumber \\
= \frac{h^{-n}}{2} \Big( \sum_{j=0}^{M}  (2-n+j) F_j(x) h^{j}\Big) \, \int_{S_h^+(x)} \phi(y) k(x,y)  \partial_r (r^2) \, d\sigma(y) \nonumber \\
= \frac{h^{-n}}{2} \big( \sum_{j=0}^{M}  (2-n+j) F_j(x) h^{j}\big) \, \Big( \, \int_{\partial B_h^+(x)} \phi(y) \partial_{\nu_y}  (r^2) \, d\sigma(y) -  \int_{\partial B_h^+(x)}  \partial_{\nu_y} \phi (y) r^2 d\sigma \Big) \nonumber \\
+\frac{h^{-n}}{2} \big( \sum_{j=0}^{M}  (2-n+j) F_j(x) h^{j}\big) \int_{\partial B_h^+(x)}  \partial_{\nu_y} \phi (y) r^2 d\sigma.
\end{eqnarray}

In the penultimate line of (\ref{N2}), we have used that
$$ \partial_{\nu_y} r^2 |_{y_n=0} = \partial_{y_n} ( |x'-y'|^2 + y_n^2 ) |_{y_n = 0} = 0,$$

to write the integral over $S_h^+$  as an integral over $\partial B^+_h$ by adding a null term.

We estimate the last integral in (\ref{N2}). Since $\partial_{\nu_y}\phi |_{\partial \Omega} = 0$ and $r=h$ on $S_h^+,$ it follows that
\begin{eqnarray} \label{N3}
\int_{\partial B_h^+(x)}  \partial_{\nu_y} \phi (y) r^2 d\sigma  = h^2 \int_{\partial B_h^+(x)}  \partial_{\nu_y} \phi (y) d\sigma \nonumber \\
= h^2 \int_{B_h^+} \Delta_y \phi \, dy = O(1) \| \phi \|_{B_h^+} |B_h^+|^{1/2}.
\end{eqnarray}
Consequently, the last integral in (\ref{N2}) is $O(h^{-n/2}) \| \phi \|_{B_h^+}$ as required.

To estimate the penultimate line in (\ref{N2}), we apply Green yet again to write it in  the form

\begin{eqnarray} \label{N4}
\frac{h^{-n}}{2} \big( \sum_{j=0}^{M}  (2-n+j) F_j(x) h^{j}\big) \, \Big( \, \int_{B_h^+(x)}  \phi(y) \Delta_y  (r^2) \, dy  -  \int_{B_h^+(x)}  \Delta_y \phi (y) r^2 d\sigma \Big) \nonumber\\
=O(h^{-n}) \| \phi \|_{B_h^+} |B_h^+|^{1/2} = O(h^{-n/2} \|\phi \|_{B_h^+}).
\end{eqnarray}

Finally, we estimate the remainder terms in (\ref{sub}) by another application of Green to get:

\begin{eqnarray} \label{N5}
 \int_{\partial B_h^+} \partial_{\nu_y} \phi(y) \cdot R^{(M)}(x,y) \, d\sigma(y)  - \int_{\partial B_h^+} \phi(y) \cdot  \partial_{\nu_y} R^{(M)}(x,y) \, d\sigma(y) \nonumber \\
 =  \int_{B_h^+} \Delta_y \phi(y) \cdot R^{(M)}(x,y) \, dy  - \int_{ B^+_h} \phi(y) \cdot  \Delta_yR^{(M)}(x,y) dy.
 \end{eqnarray}
  
 Since $R^{(M)} = O(r^{2-n+M})$ and $\Delta_y R^{(M)} = O(r^{M -n}),$ we choose $M >n$ and it follows that (\ref{N5}) is
 $$ O(h^{-2} h^{M-n+2} |B_h|^{1/2} \| \phi \|_{B_h^+})= O(h^{-n/2 +M})\| \phi \|_{B_n^+},$$
 which is residual. 
 
 Finally for the last remainder term, $R \in C^{\infty}(U \times U)$ and just use Green and the obvious bounds $\| R \|_{\infty} =O(1)$ and $\| \Delta_y R \|_{\infty} = O(1)$ to get that
 
 \begin{eqnarray} \label{N6}
 \int_{\partial B_h^+} \partial_{\nu_y} \phi(y) \cdot R(x,y) \, d\sigma(y)  - \int_{\partial B^+_h} \phi(y) \cdot  \partial_{\nu_y} R(x,y) \, d\sigma(y) \nonumber \\
 =  \int_{B_h^+} \Delta_y \phi(y) \cdot R(x,y) \, dy  - \int_{ B^+_h} \phi(y) \cdot  \Delta_y R(x,y) dy \nonumber \\
 = O(h^{-2} |B_h|^{1/2}) \| \phi \|_{B_h^+} =O(h^{n/2 - 2} ) \| \phi \|_{B_h^+}.
 \end{eqnarray}  
Since $n/2 - 2 > 1/2 -n/2$ for all $n \geq 3$, this term is also residual. 

Since all estimates above are uniform in $x \in \partial \Omega,$  in view of (\ref{N1})-(\ref{N6}) we have proved here that
\begin{equation} \label{bdyestimate}
 \sup_{x \in \partial \Omega} | \phi_h(x)| = O(h^{-\frac{n}{2}}) \| \phi_h \|_{B_h^+(x)}.
 \end{equation}

\subsection{Estimates in the boundary layer.}

 Consider the boundary layer $\partial \Omega(8^{-1} h) = \{ 0 \leq x_n \leq \frac{ h}{8} \}$ where $(x',x_n): \Omega(\epsilon) \to \R^{n}$ continue to denote Fermi coordinates where $\epsilon >0$ is sufficiently small. Then, $\partial \, ( \partial \Omega(\frac{ h}{8} ) ) = \{ x_n = 8^{-1} h \} \cup \{ x_n = 0 \}$ where $\partial \Omega= \{ x_n = 0 \}.$ To estimate $\| \phi_h \|_{ L^\infty( \partial \Omega( \frac{ h}{8} )},$ we follow an argument similar to that in  \cite{Gr} (see Lemma 6) and apply an adapted maximum principle argument. Set
 
 $$ \psi_h(x) = \exp (2 h^{-1}x_n) \phi_h(x), \quad x \in \partial \Omega( 8^{-1}  h).$$
 
 A straightforward computation with the Laplacian in Fermi coordinates shows that
 $$ P \psi_h(x) = 0, \quad x \in \partial \Omega( 8^{-1}  h ),$$
 where $ P = - \sum_{i,j} g^{ij}(x) \partial_i \partial_j + \sum_i a_i(x) \partial_i + c$ where 
 \[
 c = \frac{3}{h^2} - \frac{4}{h} >0
 \]
 for $h$ sufficiently small.
 
Then, by the weak maximum principle applied to $P$ and $\psi,$ it follows that
 
 \begin{equation} \label{layerbd}
\| \psi_h \|_{ L^\infty( \partial \Omega(8^{-1} h )} ) =  \max_{ \{ x_n = 8^{-1}  h \}} |\psi_h(x)|  + \max_{ \{ x_n = 0 \}} |\psi_h(x)|. \end{equation}

Since  $|\psi_h(x)| = | \exp(2 x_n h^{-1})| \cdot |\phi_h(x)|,$ we have 
\[
 C^{-1} | \phi_h(x) | \leq | \psi_h(x) |\leq  C |\phi_h(x)|,
 \]
  for $C>0$ independent of $h$ in $\Omega (8^{-1} h)$, it follows from the interior sup bounds (\ref{intsupbounds}) that
$$ \max_{ \{ x_n = 8^{-1}  h \} }  |\psi_h(x)| = O(h^{-\frac{n}{2}}\sup_{ \{ x_n = 8^{-1}  h \} } \| \phi \|_{L^2(B_h(x))} )$$
and from the boundary estimate (\ref{bdyestimate})
$$ \max_{ \{ x_n = 0 \} }  |\psi_h(x)| = O(h^{-\frac{n}{2}} \max_{x\in \partial \Omega} \| \phi \|_{L^2(B_h^+(x))}   )$$

Then, from (\ref{layerbd}),

\begin{equation} \label{bdylayerupshot}
\| \psi_h \|_{ L^\infty( \partial \Omega( 8^{-1}h ) )} = O(h^{\frac{1-n}{2}} ).
\end{equation}

The boundary layer estimate

\begin{equation} \label{bdylayerupshot2}
\| \phi_h \|_{ L^\infty(\partial \Omega( 8^{-1} h ) )} = O(h^{-\frac{n}{2}}  \sup_{x \in \partial \Omega( 8^{-1}  h )) }  \| \phi \|_{L^2(B_h^+(x))}  )
\end{equation}
then follows from (\ref{layerbd}), since $C^{-1} \leq \exp ( 2 x_n/h) \leq C$, $C>0$ independent of $h$, when $|x_n| \leq 8^{-1}/h .$

This completes the proof of Theorem \ref{thm2}. \end{proof}

\begin{remark}  (a) As an immediate corollary of Theorems \ref{thm1} and \ref{thm2} one obtains the sup bound $\| \phi_h \|_{L^\infty(\Omega)} = O(h^{\frac{1-n}{2} })$ for Dirichlet or Neumann eigenfunctions when $\Omega$ is a compact $C^{\infty}$ Riemannian manifold with boundary. We note that  unlike the argument in \cite{Gr}, the proof here of the sharp eigenfunction sup bounds is entirely stationary and local using only the non-concentration bound up to the  boundary  in Theorem \ref{thm1} and the local integral estimate in Theorem \ref{thm2}.

(b) In view of Theorems 1 and 2,  any improvement in the non-concentration bound for $\| \phi_h \|_{L^2(B_h)(x)}$ for all $x \in \Omega$ as in the quantum ergodic (QE) case in  \cite{Han, HR}, automatically implies the corresponding improvement in the upper bound for $ \| \phi_h \|_{L^\infty(\Omega)}$ all the way up to the boundary $\partial \Omega$ away from corners.

\end{remark}


\begin{thebibliography}{\hspace{1cm}}

 \bibitem[Av]{Av} V. G. Avakumovic, \"{U}ber die Eigenfunktionen auf geschlossenen Riemannschen Mannig- faltigkeiten. Math. Z., 65 (1956), 327-344.
 
 
 \bibitem[BHT]{BHT}  A. Barnett, A. Hassell and M. Tacy,   Comparable upper and lower bounds for boundary values of Neumann eigenfunctions and tight incluseion of eigenvalues,   Duke Math. J. 167(16), (2015) DOI:10.1215/00127094-2018-0031.  


\bibitem[C]{C}    H. Christianson, Semiclassical non-concentration near hyperbolic orbits, J. Func. Anal. 246 (2) (2007) 145--195.

\bibitem[CHT]{CHT} H. Christianson, A. Hassell and J. Toth,  Exterior Mass Estimates and L2-Restriction Bounds for Neumann Data Along Hypersurfaces,
International Mathematics Research Notices, Volume 2015 (6) (2015),1638–1665, 


\bibitem[CT]{CT} H. Christianson and J. Toth, Small-scale mass estimates for Neumann eigenfunctions in piecewise $C^{\infty}$  bounded convex planar  domains  (2023) 35 pgs.


\bibitem[DF]{DF} H. Donnelly and C. Feﬀerman, Growth and geometry of eigenfunctions of the Laplacian.
Analysis and partial diﬀerential equations, Lecture Notes in Pure and Appl.
Math., 122, Dekker, New York, (1990),  635–655.

\bibitem[Gr]{Gr} D. Grieser. Uniform bounds for eigenfunctions of the Laplacian on manifolds with boundary.
Comm. in P.D.E., 27 (7-8), (2002), 1283-1299.


\bibitem[Han]{Han} X. Han, Small scale quantum ergodicity in negatively curved manifolds. Nonlinearity, 28(9) (2015), 3263-3288.

\bibitem[HR]{HR} H. Hezari and G. Rivi\`{e}re, $L^p$ norms, nodal sets and quantum ergodicity. Adv. Math., 290 (2016)  938-966.

\bibitem[HT]{HT} A. Hassell and T. Tao, Upper and lower bounds for normal derivatives of Dirichlet eigen-
functions, Math. Res. Lett. 9 (2002), 289-307.

\bibitem[Ho]{Ho} L. H\"{o}rmander, Spectral function of an elliptic operator. Acta Math. 121 (1968), 193-218.

\bibitem[Ho2]{Ho2} L. H\"{o}rmander, Analysis of Linear Partial Differential Operators I. Grundlehren Volume 256, Springer Verlag, 1983.

\bibitem[L]{L} G. W. Levitan, On the asymptotic behavior of the spectral function of a self-adjoint differential equation of the second order. Izv. Akad. Nauk SSSR Ser. Mat., 16 (1952), 325-352.

\bibitem[So]{So} C. Sogge,  Localized $L^p$-estimates of eigenfunctions: a note on an article of Hezari and Rivi\`{e}re. Adv. Math., 289 (2016) 384-396.

\bibitem[Ta]{Ta}  D. Tataru, On the regularity of boundary traces for the wave equation. Ann. Scuola
Norm. Sup. Pisa Cl. Sci. (4) 26 (1998), no. 1, 185-206.

\bibitem[Zw]{Zw}  M. Zworski, {\em Semiclassical Analysis}, Graduate Studies in Mathematics, vol. 138. American Mathematical Society, Providence, RI (2012).





 \end{thebibliography}
\end{document}